\newtheorem{theorem}{Theorem}[section]
\theoremstyle{definition}
\newtheorem{definition}[theorem]{Definition}
\begin{document}

\title{Symmetric Colorings of Polypolyhedra}

\author[belcastro]{sarah-marie belcastro}
 \address{belcastro: MathILy and Smith College, MA, USA}
\email{smbelcas@toroidalsnark.net}

\author[Hull]{Thomas C. Hull}
\thanks{The authors would like to thank Robert Lang for sharing the polypolyhedra {\em Mathematica} code that allowed us to produce the illustrations in this paper.  The final version of this paper will appear in {\em Origami$^6$} published by the AMS in 2016.}
 \address{Hull: Department of Mathematics, Western New England U., Springfield MA, USA}
\email{thull@wne.edu}

\subjclass{Primary 52B10; Secondary 52B15, 05A15}


\maketitle

%

%
%

\section{Introduction}


Most origami practitioners are familiar with Hull's Five Intersecting Tetrahedra model \cite{H1}, but fewer are familiar with Lang's 54 models that generalize this work \cite{L1}.  Lang terms these \emph{polypolyhedra} because they are compounds of 1-skeleta (so there are many, or poly-, of them) that have polyhedral symmetries.  Polypolyhedra are usually depicted with each component in a single color.  Alternate colorings of polypolyhedra can highlight various, sometimes hidden, structures and symmetries related to the underlying polyhedral rotational symmetry group.  Hull created a symmetric coloring of the Five Intersecting Tetrahedra that used 6 colors, each of which appeared on each of the five tetrahedra, and asked how many such colorings there are of the compound model.  We answered this question (which turns out to be of primarily mathematical interest because the different colorings look quite similar to each other); then, belcastro wondered whether our analysis might extend to producing and counting symmetric colorings of Lang's polypolyhedra. The present work represents our findings.

\section{Summary of Lang's Work}

What exactly are the objects that we are coloring?  They are nontrivial compounds of 1-skeleta, with the following properties \cite{L1}.
\begin{itemize}
\item Each vertex has degree at least two.  This excludes objects that are secretly piles of sticks. 
\item No two edges intersect except at vertices.  This excludes objects such as the stella octangula (compound of two tetrahedra) and ensures that each edge may be made from a separate unit of paper.
\item The compound is edge transitive;  there exists a rotational symmetry of the compound mapping any given edge to any other given edge.  Alternatively, there is only a single orbit of edges under the symmetry group action.  
\item Only tetrahedral (order 12), cuboctahedral (order 24), and icosidodecahedral (order 60) rotational symmetry groups are allowed.  
\end{itemize}
It follows from edge-transitivity that there are at most two vertex orbits under the symmetry group action.  Consider a sequence of group elements $g_1,\dots,g_n$ acting on edge $e$ to produce all $n$ edges $g_1(e)=e_1,\dots, g_n(e)=e_n$.  Examine the orbits of the vertices $v_1, v_2$ of $e$ under this sequence of actions; there are at most two distinct orbits generated.

Conspicuously missing from the list of properties is that the 1-skeleta are of 2- or 3-dimensional polytopes.  This is because several of the 1-skeleta are nonconvex and many are not the skeleta of polytopes because it is not possible to induce 2-faces from the 1-skeleta.  (These have skew polygon cycles bounding the areas that would visually indicate 2-faces.)

Lang used a comprehensive computer search to determine all possible types of polypolyhedra.  Some types appear in several variants.
 Here is a straightforward way to think of the variants on a polypolyhedron type: View each component as a linkage, with hinges at the vertices and rods for edges.  Now take the (radially) outermost vertices and push them towards the center.  At some point, edges of different components will intersect; after passing through each other, the configuration has a different interlacing, and the set of these interlacings comprises the set of variants.
 
 We list Lang's polypolyhedra here with various symmetry properties noted, and list first those that have non-polygon components.

 \begin{center}
 \begin{tabular}{|p{4.4cm}|l|c|c|}
\hline
Name/Description & symmetry group & vertex transitivity & variants \\ \hline\hline
Five Intersecting Tetrahedra & icosidodecahedral & vertex transitive & n/a \\\hline
 five intersecting non-convex hexahedra
 & icosidodecahedral & not vertex transitive  & n/a \\ \hline
four intersecting bi-3-pyramids (no base edges)
& cuboctahedral & not vertex transitive  & 2 \\ \hline
five intersecting edge-dented tetrahedra 
& icosidodecahedral & not vertex transitive  & 2 \\ \hline
ten intersecting bi-3-pyramids (no base edges)
& icosidodecahedral & not vertex transitive  & 3 \\ \hline
six intersecting bi-5-pyramids (no base edges)
& icosidodecahedral & not vertex transitive  & 4 \\ \hline
\hline \hline
 four interlaced triangles & cuboctahedral & vertex transitive & n/a \\ \hline
six interlaced pentagons & icosidodecahedral & vertex transitive & n/a \\ \hline
ten interlaced triangles & icosidodecahedral & vertex transitive & n/a \\ \hline
three interlaced squares & tetrahedral & not vertex transitive  & n/a \\ \hline
four interlaced hexagons & cuboctahedral & not vertex transitive  & n/a \\ \hline
six interlaced decagons & icosidodecahedral & not vertex transitive  & n/a \\ \hline
 ten interlaced hexagons & icosidodecahedral & not vertex transitive  & n/a \\ \hline
 four interlaced triangles & tetrahedral & vertex transitive & n/a \\ \hline
six interlaced squares & cuboctahedral & vertex transitive & 2 \\ \hline
eight interlaced triangles & cuboctahedral & vertex transitive & 3 \\ \hline
 twelve interlaced pentagons & icosidodecahedral & vertex transitive & 5 \\ \hline
twenty interlaced triangles & icosidodecahedral & vertex transitive & 23 \\
\hline
\end{tabular}
\end{center}

\section{Counting Symmetric Colorings of Polypolyhedra}\label{sec:thing}

We are interested in edge colorings of polypolyhedra that respect their underlying symmetry groups. 

\begin{definition}\label{def:symm}
We call an edge coloring $c$ of a polypolyhedron $P$ a {\em symmetric coloring} if the action of any element of the underlying symmetry group either leaves all edges of a given color the same color, or takes all edges of a given color to the set of edges of another color.  That is, for the edges $e_i$ in a fixed color class and $g$ any element of the symmetry group, we have either $c(g(e_i)) = c(e_i)$ for all $i$, or $c(g(e_i)) = c(g(e_j))$ exactly when $c(e_i)=c(e_j)$.
\end{definition}

Because any action of the symmetry group takes one component of a polypolyhedron to another component, every component must use the same number of colors; thus the number of colors must divide the number of edges in a component.    

There always exists the symmetric coloring in which all edges of a component are the same color, and there is exactly one such coloring.  At the other end of the spectrum, {\em rainbow colorings} have each edge of a component a different color.  

We now carefully describe a particular type of symmetric coloring of the Five Intersecting Tetrahedra, and note that this subsection serves as a model for how arguments will proceed in the remainder of this section.

\subsection{Five Intersecting Tetrahedra (FIT)} 

The FIT has icosidodecahedral rotational symmetry (group $A_5$) and thus may be inscribed in a dodecahedron, as shown in Figure \ref{fig:inscr}(left).  Every element of $A_5$ has order 1, 2, 3, or 5, corresponding to the identity and 2-fold, 3-fold, and 5-fold rotations around axes passing through antipodal dodecahedral edges, vertices, and faces respectively.
We may decompose the dodecahedron's edges into six non-perfect matchings; see Figure \ref{fig:inscr}(center).  The dodecahedron is trivially a polypolyhedron, and one can see by inspection that the edge-coloring given by the matching decomposition is a symmetric coloring.

\begin{definition}\label{def:band}
Given an edge $e_i$ of the FIT considered as a vector in $\mathbb R^3$, find the 5-fold rotational axis $a$ such that $e_i\cdot a$ is minimal.  Consider any element $r_a\in A_5$ corresponding to $a$.  The \emph{band} generated by $e_i$ is the orbit of $e_i$ under $r_a$. 
\end{definition}

 In Figure \ref{fig:inscr}(right), we see that the edges that form a band in the FIT correspond to one of the six exhibited non-perfect matchings of the dodecahedron.   
\begin{figure}[htp]
\begin{center}
	\includegraphics[scale=.45]{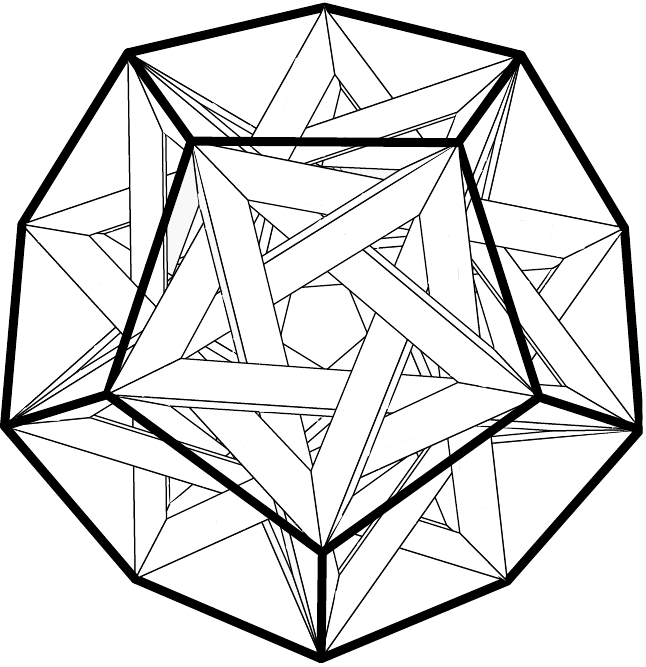}\hfill \includegraphics[scale=.45]{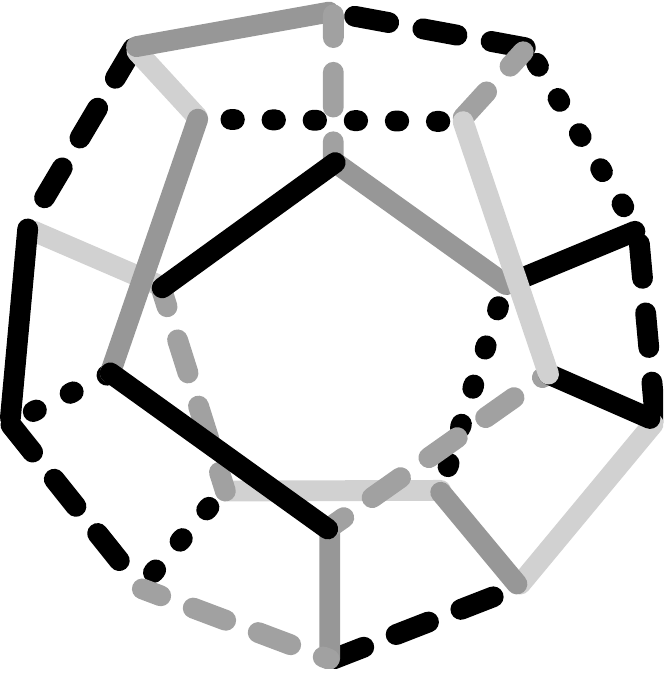}\hfill \includegraphics[scale=.45]{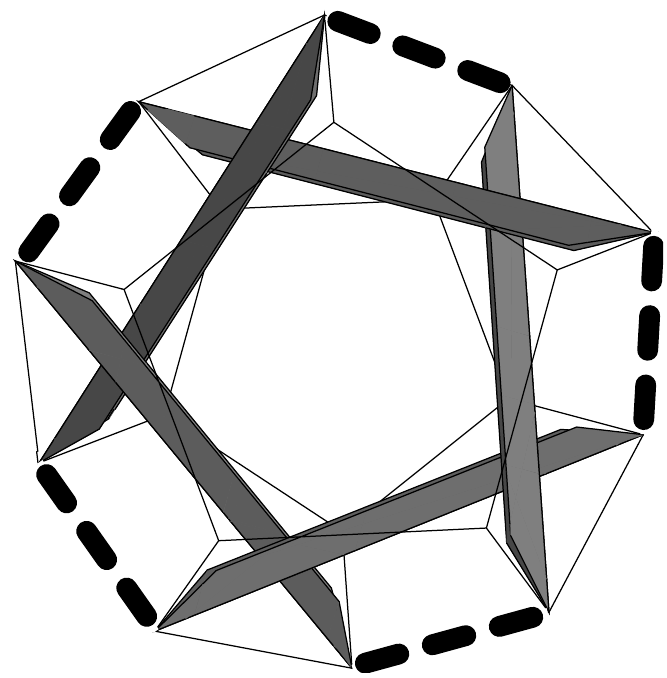}
	\caption{The FIT inscribed in a dodecahedron (left) and the dodecahedron decomposed into matchings (center), and matching with corresponding band highlighted (right).}
	\label{fig:inscr}
\end{center}
\end{figure}
 Indeed, these non-perfect matchings can be defined in exactly the same way as bands, with corresponding matchings and bands sharing the same 5-fold rotational axis.  We therefore refer to the matching corresponding to a band $B$ as $M(B)$. 

We may color the edges of the FIT in bands by repeatedly adding a color to an uncolored edge and generating the band corresponding to this edge. We refer to this FIT coloring as a \emph{band coloring}.  Because the bands are in correspondence with the dodecahedral matchings, and these matchings form a symmetric coloring, a band coloring is also a symmetric coloring.
 
Note that an individual tetrahedron has no 5-fold rotational symmetries, and so no two edges of a tetrahedron can belong to the same band; thus, a band coloring necessarily induces a rainbow coloring on each tetrahedron of an FIT.  A band coloring is shown in Figure \ref{fig:fitband}. Origami artist Denver Lawson also discovered band colorings of the FIT and displayed such a coloring at the 2012 Fall Convention of the British Origami Society. 
\begin{figure}[htp]
\begin{center}
\vspace{-.1in}
	\includegraphics[scale=.325]{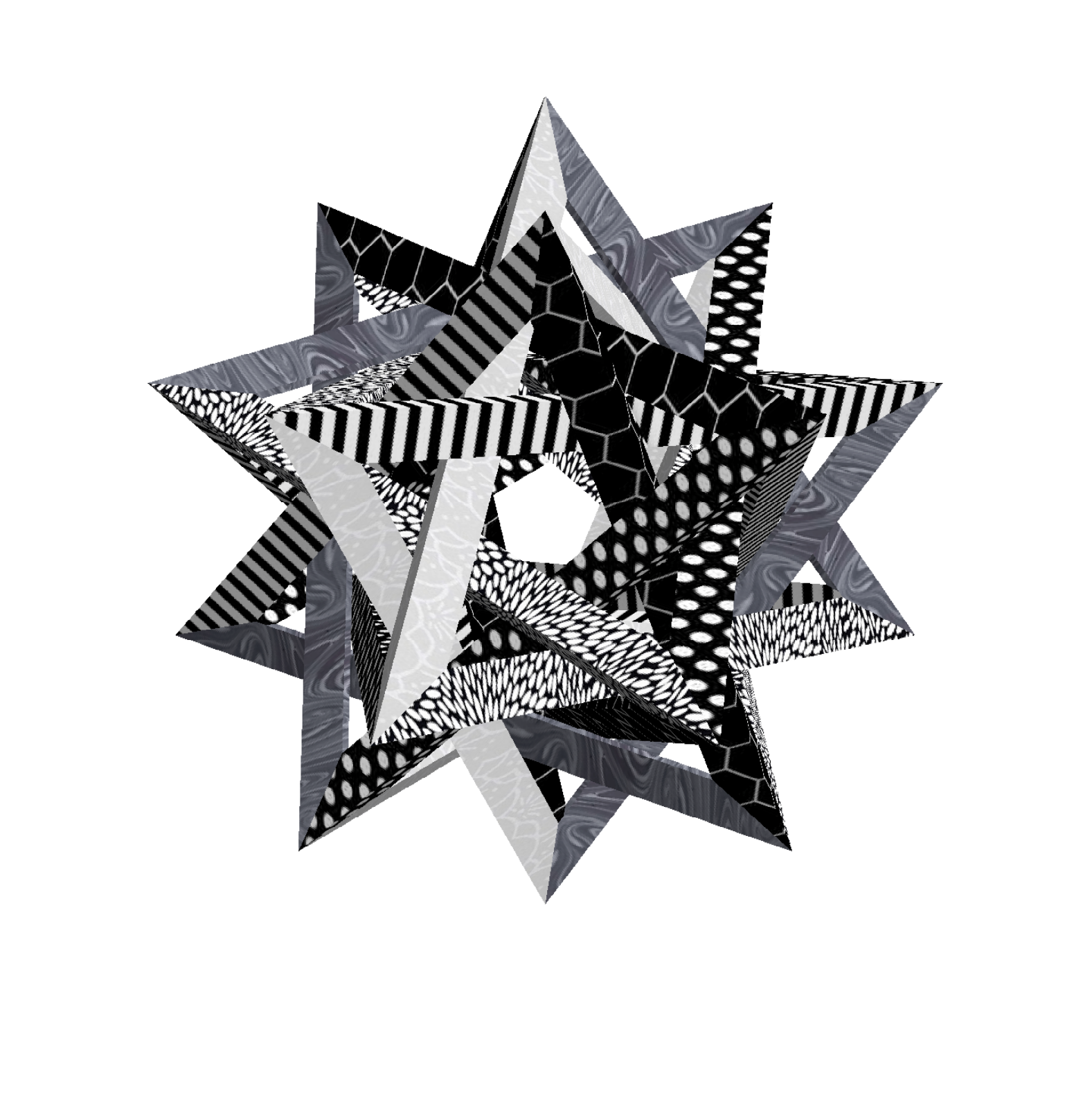}\hspace{.5cm}
	\includegraphics[scale=.3]{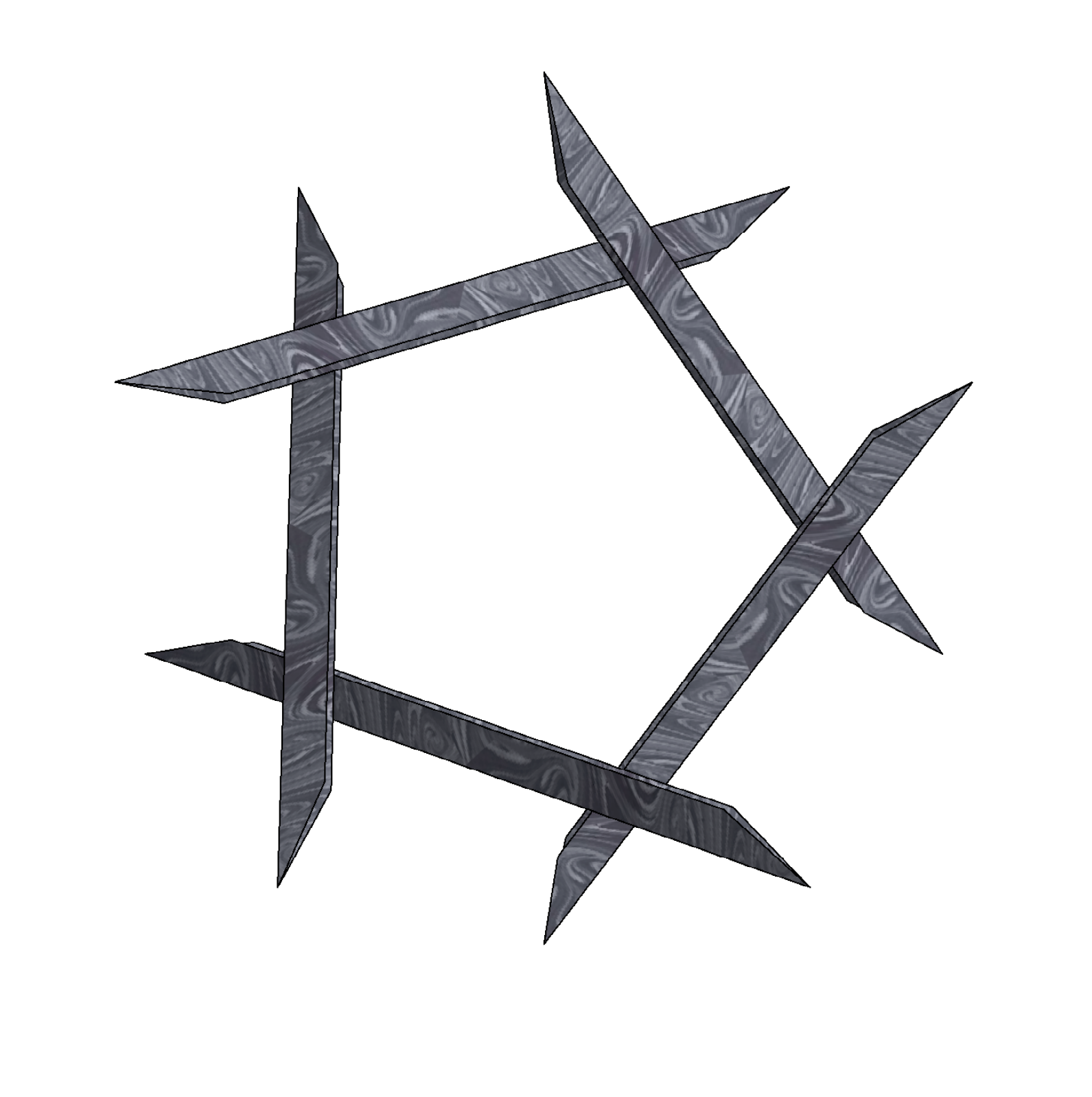}
	\vspace{-.4in}
	\caption{ A symmetrically colored FIT (left) with individual band forming a pentagonal circuit (right).}
	\label{fig:fitband}
\end{center}
\end{figure}




\subsubsection{Counting Band Colorings of the FIT}

First note that there are 60 different colorings for a rainbow edge-colored tetrahedron. Here is why: Fix the color of edge $e_1$, and note that there is one edge $e_2$ that is independent of $e_1$. There are then 5 color choices for $e_2$, and $4!$ color choices for the remaining 4 edges.  However, this total of 120 overcounts by a factor of two because of the 2-fold rotational symmetry that fixes $e_1$ and $e_2$.

Second, note that while the FIT has symmetry group $A_5$, a single tetrahedron has symmetry group $A_4$.

\begin{theorem}
There are exactly 12 different band colorings of the FIT.
\end{theorem}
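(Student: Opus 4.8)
The plan is to count band colorings first as labeled colorings of the edges and then quotient by the rotation group $A_5$, in exact parallel with the computation $60 = 6!/|A_4|$ that the excerpt just carried out for a single tetrahedron. So I read ``$12$ different band colorings'' as ``band colorings up to the rotational symmetry group,'' and I expect the answer to appear as $6!/|A_5| = 720/60 = 12$.

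First I would pin down the combinatorics of bands. A 5-fold rotation $r_a$ has order $5$ and fixes no edge of the FIT, so each band is an $r_a$-orbit of size exactly $5$; since there are $30$ edges, there are exactly $6$ bands, in bijection with the $6$ five-fold axes. Because $A_5$ acts on the five tetrahedra as its natural degree-$5$ action, $r_a$ is a $5$-cycle on the tetrahedra, so the five edges of a band lie one in each tetrahedron. Consequently the six edges of any single tetrahedron meet the six bands bijectively. Hence a band coloring (which colors each band with its own distinct color) is precisely a bijection from the $6$ bands to the $6$ colors, giving $6! = 720$ labeled band colorings; equivalently, restriction to a fixed tetrahedron $T_1$ identifies band colorings with the $720$ rainbow colorings of $T_1$ counted earlier, each extending uniquely because every band meets $T_1$ in a single edge.

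Next I would set up the group action. Since $g r_a g^{-1} = r_{g(a)}$, each $g\in A_5$ carries $r_a$-orbits to $r_{g(a)}$-orbits, so $A_5$ permutes the $6$ bands and therefore acts on the set of $720$ band colorings. The key step is that this action is \emph{free}: if a rotation $g$ fixes a band coloring $c$, then because distinct bands carry distinct colors, $g$ must fix every band setwise; but the action of $A_5$ on the six bands is faithful (it is nontrivial, as $r_a$ fixes only its own band $B_a$ and permutes the other five, and $A_5$ is simple, so the kernel of a nontrivial action is trivial), forcing $g = e$.

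Finally, a free action of a group of order $60$ on a $720$-element set has every orbit of size $60$, so there are exactly $720/60 = 12$ orbits, i.e.\ exactly $12$ band colorings up to rotation. (Burnside's lemma gives the same total, since only the identity fixes any coloring.) I expect the main obstacle to be the geometric bookkeeping of the first step—verifying that bands are genuine $5$-orbits meeting each tetrahedron exactly once—together with the faithfulness of the $A_5$-action on the six bands; once those are established, the orbit count is immediate.
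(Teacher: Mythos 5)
Your proposal is correct and follows essentially the same route as the paper's third proof (Burnside/orbit counting): both count the $6!=720$ labeled band colorings and divide by $|A_5|=60$ after establishing that only the identity fixes a coloring. If anything, your version is slightly more complete than the paper's, which simply asserts that no non-identity element of $A_5$ fixes any band coloring, whereas you derive this from the faithfulness of the $A_5$-action on the six bands (via simplicity of $A_5$) together with the observation that a color-preserving rotation must fix every band setwise.
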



\begin{proof}[Proof 1 (tetrahedral)]\label{tetraproof} Edge-color one of the five tetrahedra, $T$, with one of the 60 possible rainbow edge-colorings.  Consider an edge $e_1$ of $T$; it is part of a band $\{r_a^k(e_1)\}$, which determines an edge on each of the other four tetrahedra.  Now consider a second edge $e_2$ of $T$ with corresponding band $\{r_b^k(e_2)\}$. There is exactly one element $g\in A_4$ (and thus in $A_5$) such that $g(e_1)=e_2$, and thus $g(\{r_a^k(e_1)\}) =  \{r_b^k(e_2)\}$ because the symmetries permute the bands.  That is, the position of the $e_2$ band is determined, and by the same reasoning so are the other bands.  This produces 60 band colorings of the FIT, but we could have used any of the five tetrahedra as $T$. Thus there are $60/5 = 12$ different FIT band colorings.
\end{proof}

\begin{proof}[Proof 2 (band)]\label{bandproof} Fix a band, $B_1=\{r_a^k(e_1)\}$, and assign it a color.  Fix a second band $B_2=\{r_b^k(e_2)\}$; there are 5 colors that can be assigned to this band, but we could have chosen any of the 5 remaining bands to be $B_2$, so this does not affect the total number of colorings.   
There are $4!$ ways to color the remaining 4 bands.  However, $M(B_1)$ contains a dodecahedron edge opposite to an edge of $M(B_2)$ (see Figure \ref{fig:inscr}(center)). Therefore there exists $g\in A_5$ of order 2 such that  $B_1, B_2$ are both invariant under action of $g$. Thus, there are $4!/2 = 12$ different ways to color the remaining bands and 12 different band colorings of the FIT.
\end{proof}


\begin{proof}[Proof 3 (Burnside)]\label{burnproof}  Burnside's Lemma states that if $G$ acts on $S$, then the number of orbits of the action is $\displaystyle\frac{1}{\vert G\vert}\sum_{g\in G}\vert\{s\in S\ \vert \ g(s) = s \}\vert$.  Here, $G = A_5$ and $S$ is the set of $6!=720$ possible band colorings. No element of $A_5$ fixes any element of $S$, except for the identity $e\in A_5$ which fixes all $s\in S$. Thus there are $720/60=12$ orbits of $A_5$ on $S$.  Each orbit contains colorings that are equivalent under some symmetry in $A_5$, and thus there are 12 distinct FIT band colorings.
\end{proof}

\medskip

\subsection{Generalizations from the FIT}

The FIT is a remarkably symmetric polypolyhedron, even among polypolyhedra.  Each band has two properties: (a) it corresponds to a matching in the dodecahedron 
graph, and (b) the edge units in the band sequentially touch each other and form a visual cycle around the model.  As we will see, these two properties do not coincide for sets of edge units in other polypolyhedra. Thus, we investigate two types of symmetric polypolyhedra edge colorings, \emph{matching} edge colorings and \emph{visual band} edge colorings.  

\subsubsection{Matching Colorings}\label{sec:MC}

A matching coloring descends directly from the underlying symmetry group, so matching colorings always exist and counting such colorings will proceed analogously to the proofs given for the FIT above.  In this way, we only need to make a 1-1 correspondence between polyhedral matchings and sets of polypolyhedral edges in order to quickly know the number of matching edge colorings.  

Analogous to our matching decomposition of the dodecahedron, there is a decomposition of the cube (rotational symmetry group $S_4$) into 4 non-perfect matchings, each with 3 edges; see Figure \ref{fig:matches}(left). 
\begin{figure}[htp]
\centerline{ \includegraphics[scale=.35]{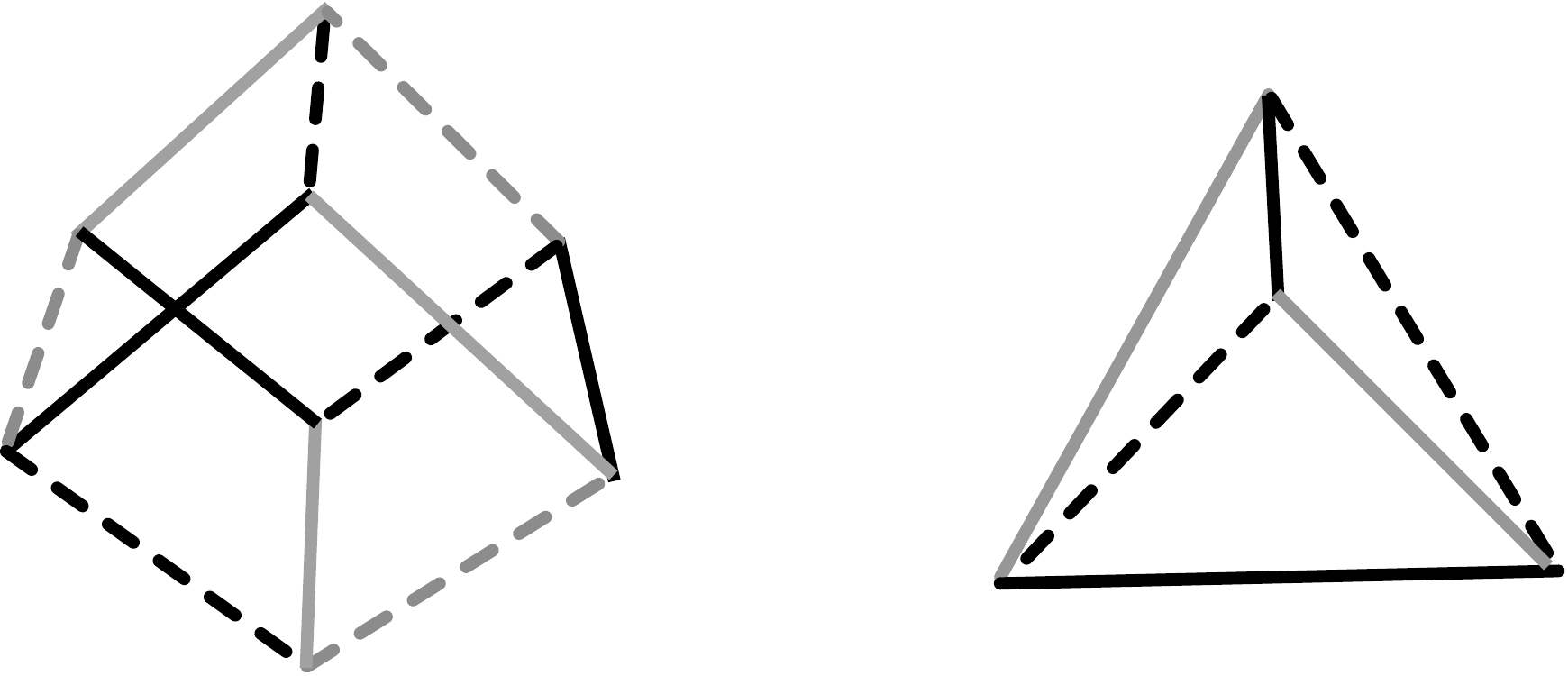}}
\vspace{-.2in}
\caption{The cube and tetrahedron with matchings highlighted.}
\label{fig:matches}
\end{figure}
Coloring this configuration properly requires four colors.
We will compute the number of different colorings as Proof 2 (bands) above.  Fix a matching $M_1$ and assign it a color. Choose a second matching $M_2$; assignment of color to $M_2$ does not affect the total number of colorings.  There are $2!$ ways to color the other two matchings.  Because $M_1$ contains an edge opposite to an edge of $M_2$, there exists $g\in S_4$ of order 2 such that $M_1$ and $M_2$ are both invariant under action of $g$.  Thus there is only $2!/2=1$ way to color these matchings of the cube.
Alternatively, we could use Burnside's Lemma.  Only the identity in $S_4$ leaves all $4!=24$ colorings of the matchings invariant.  Thus there is only $24/24 = 1$ way to color such matchings.



The self-dual tetrahedron \emph{only} decomposes into three perfect matchings, each with two edges; see Figure \ref{fig:matches}(right).  There are $3!=6$ possible ways to color these matchings.  All of these colorings remain fixed under the identity transformation and the three 2-fold symmetries of the tetrahedron.  The tetrahedral rotational symmetry group $A_4$ has order 12, and thus there are $(6+3\cdot 6)/12= 2$ different ways to color these matchings.

\subsubsection{Visual Band Colorings}\label{subsec:visual bands}
A visual band must have struts touching sequentially, so existence of a visual band coloring depends on the incidence relations between the polypolyhedron struts.  Thus, identifying and counting visual band colorings is somewhat idiosyncratic.  In particular,  the existence of visual band colorings varies depending on which variant of a polypolyhedron we consider.  Moreover, we will often encounter non-proper visual band colorings.  Consider, for example, a polypolyhedron with polygon components.  Any visual band will be composed of edges from a single component, and this induces the unique monochromatic-component coloring.

%
%

\subsection{Five Intersecting Edge-dented Tetrahedra (FIET)} On each component of the FIET, the vertices form 3-pyramids (without base edges), so there are 4 ``faces," each of which has 6 edges.  The FIET has icosidodecahedral symmetry, and there are two polypolyhedral variants. (It is denoted 5-4-6 in Lang's nomenclature.) The two polypolyhedral variants are substantially different: the components in one (A, see Figure \ref{fig:fietA}) look like in-dented tetrahedra, while the components in the other (B, see Figure \ref{fig:fietB}) correspond to severely edge-out-dented tetrahedra.
\begin{figure}[htp]
\centerline{ \includegraphics[scale=.45]{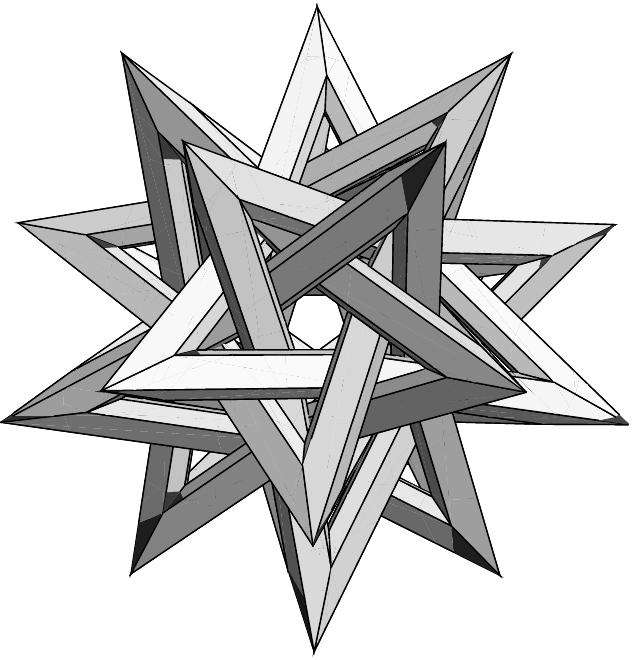} \includegraphics[scale=.4]{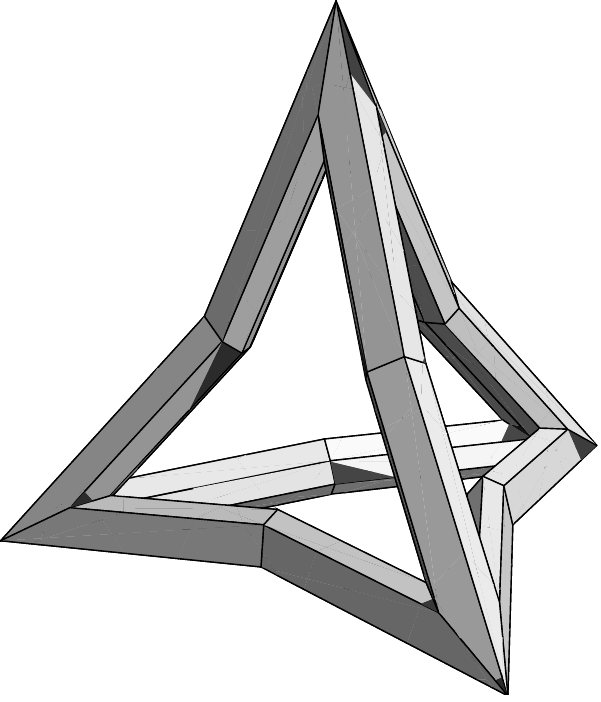}  \includegraphics[scale=.35]{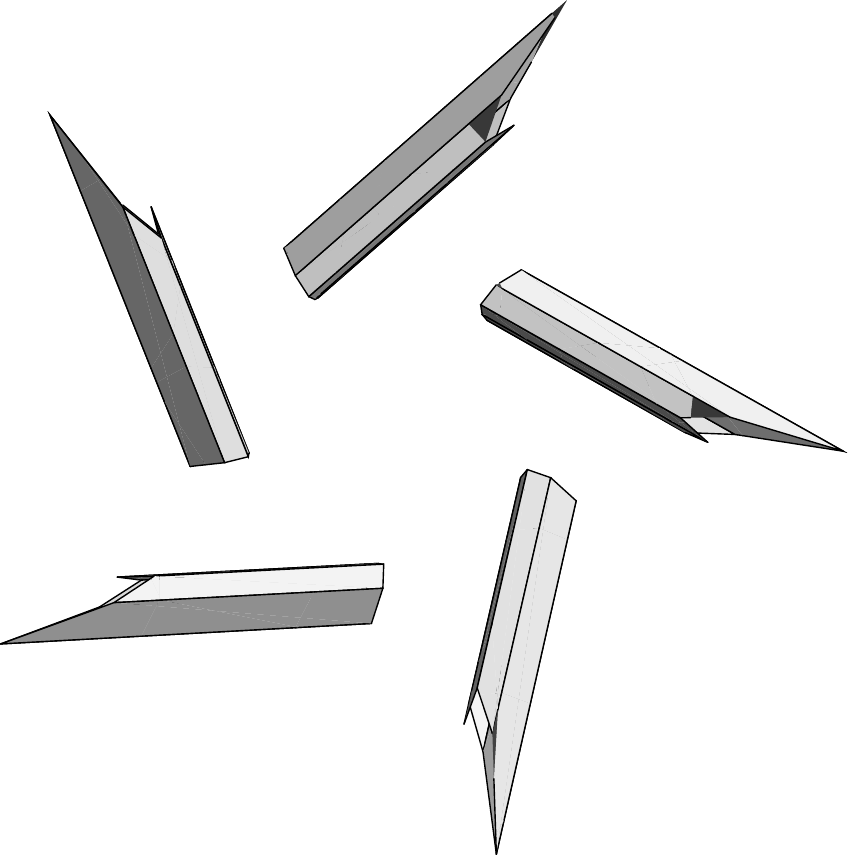}\includegraphics[scale=.3]{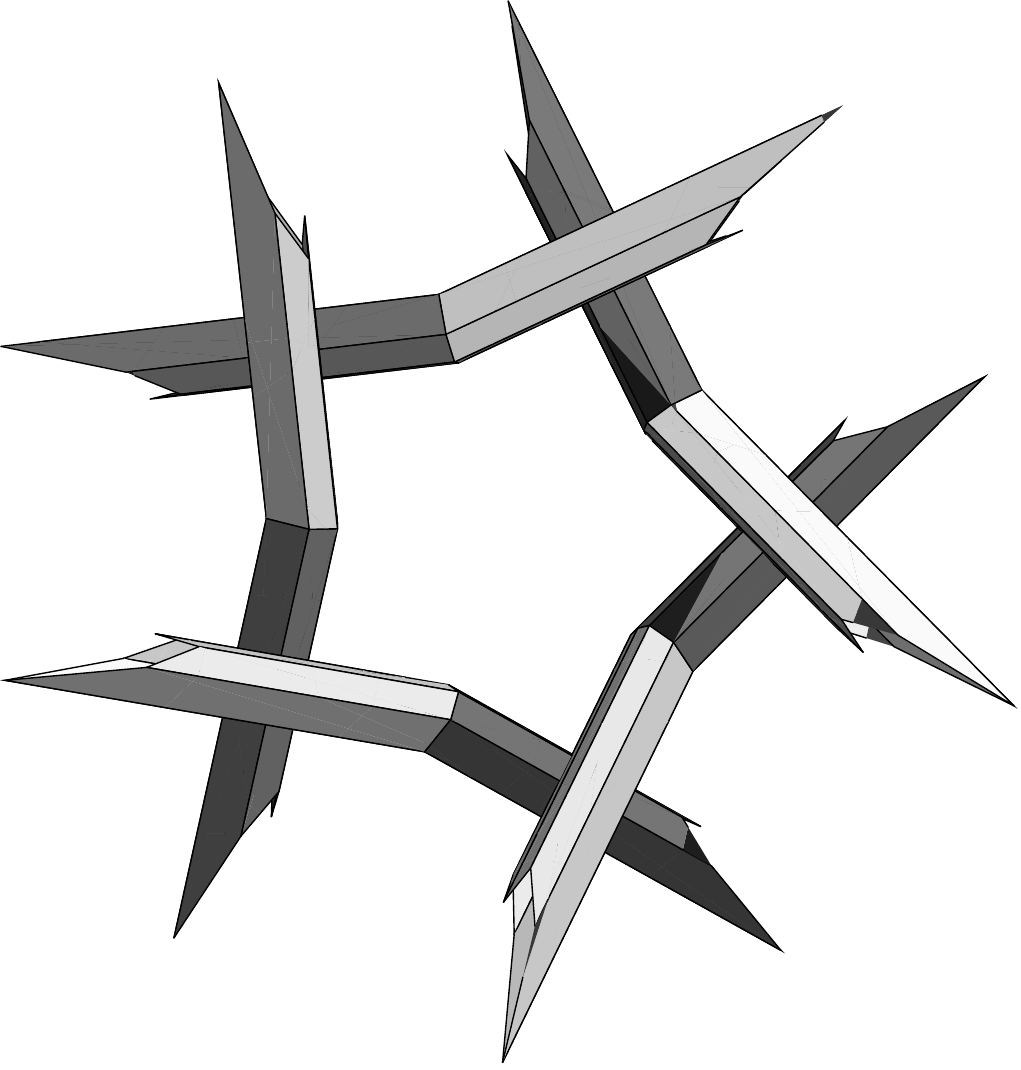} }
\caption{FIET variant A (left), with a single component (center), a single matching (right), and a pair of matchings (right-er).}
\label{fig:fietA}
\end{figure}

Variant A has a matching coloring but not a visual band coloring.  Taking one edge from each component, we form a dodecahedral matching $M$.  However, there is a second matching $M'$ of the FIET corresponding to this same dodecahedral matching (see Figure \ref{fig:fietA}(right-er)), and a total of 12 matchings.  
Given a coloring of the six dodecahedron matchings (one of the 12 that exist), there are ${12\choose 6}$ ways to assign 6 (of 12) colors to the six matchings $M_1,..., M_6$.   Then there are $6!$ ways to assign the remaining colors to the matchings $M_1',...,M_6'$.  Thus, there are $12 \cdot {12\choose 6} \cdot 6!$ colorings of this configuration of polypolyhedral matchings.


For variant B, a visual band $B$ looks like a 5-pointed star (see Figure \ref{fig:fietB}(right)), with one peak (two edges) from each component. It corresponds to a dodecahedral matching $M(B)$, so a visual band coloring uses
\begin{figure}[htp]
\centerline{ \includegraphics[scale=.45]{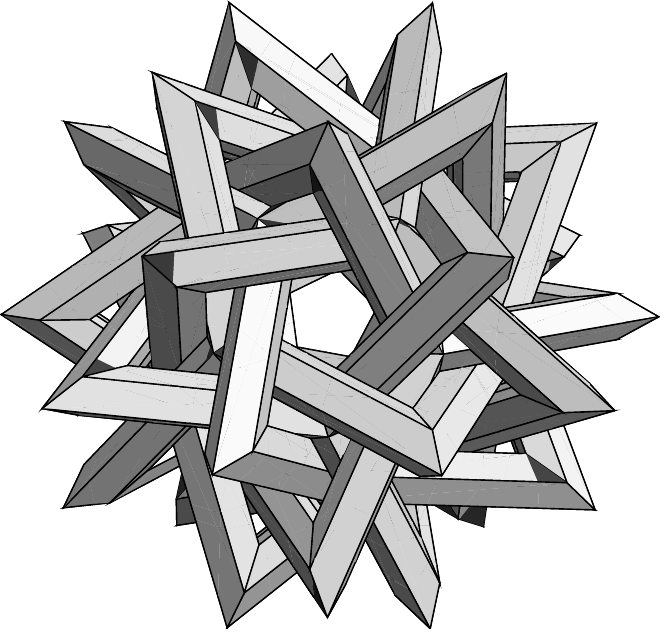} \includegraphics[scale=.4]{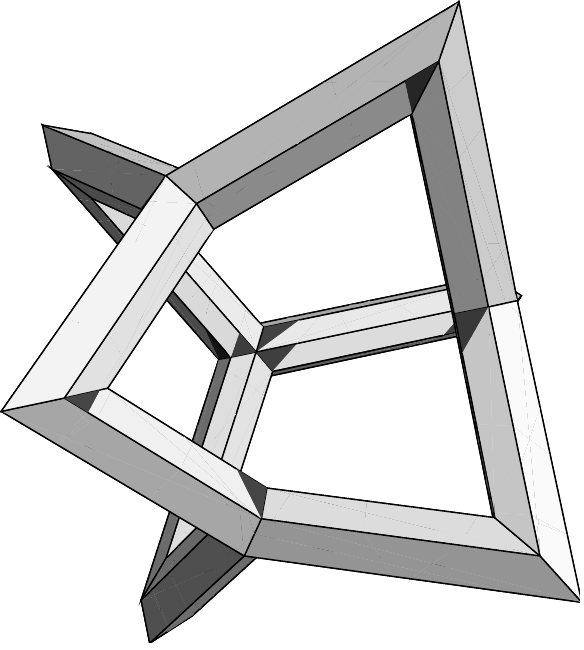}  \includegraphics[scale=.42]{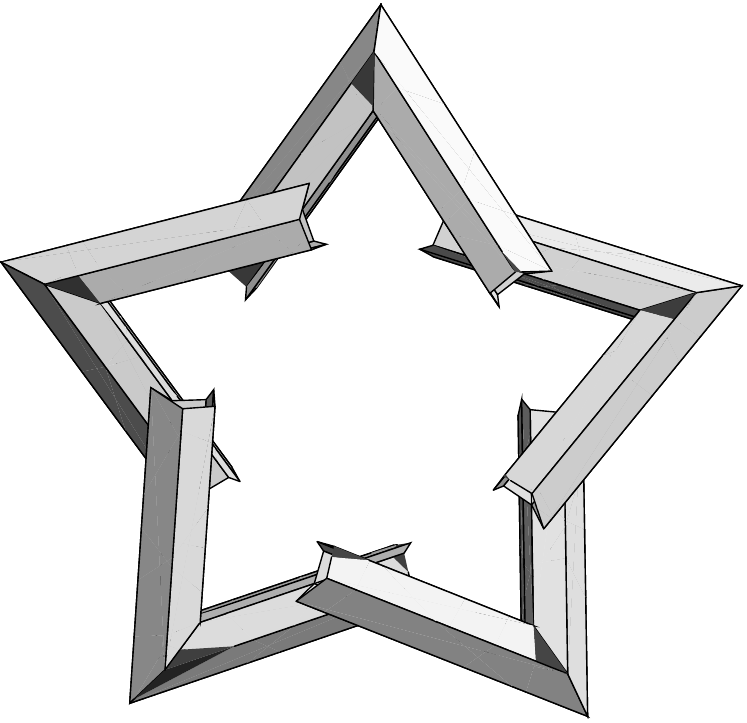}\includegraphics[scale=.37]{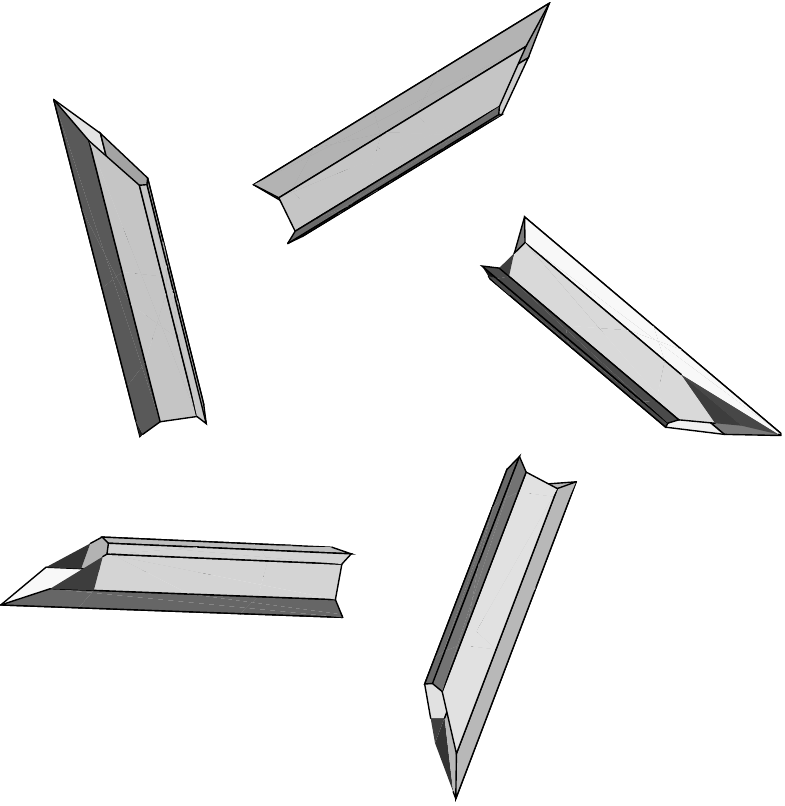} }
\caption{The FIET variant B (left), with a single component (center), a visual band (right), and a matching (right-er).}
\label{fig:fietB}
\end{figure}
6 colors per component, and there are 12 such colorings. 
The matching colorings of variant B are the same as in A.


\subsection{Five Intersecting Non-convex Hexahedra (FINH)} Each component of the FINH has skew 4-sided faces (see Figure \ref{fig:finh}(center)).  The FINH has icosidodecahedral symmetry.  (It is denoted 5-6-4 in Lang's nomenclature.)
\begin{figure}[htp]
\centerline{ \includegraphics[scale=.45]{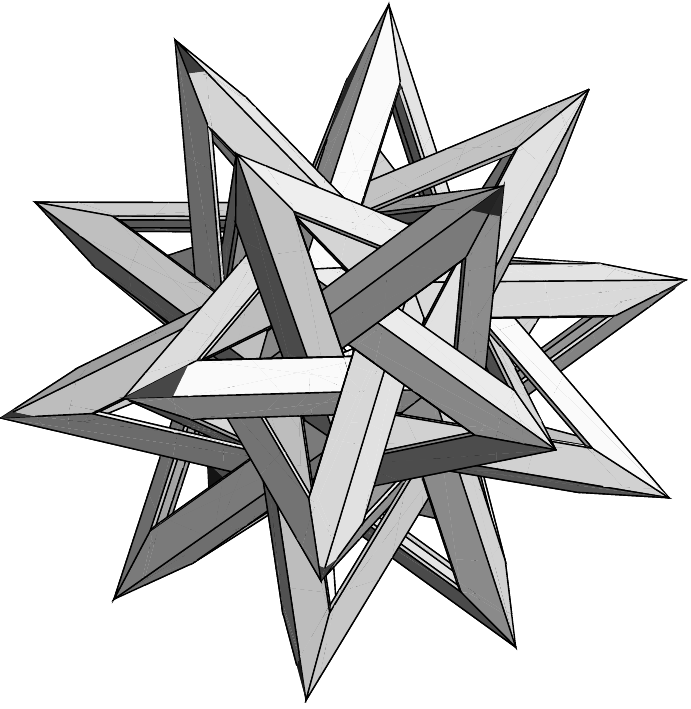} \includegraphics[scale=.35]{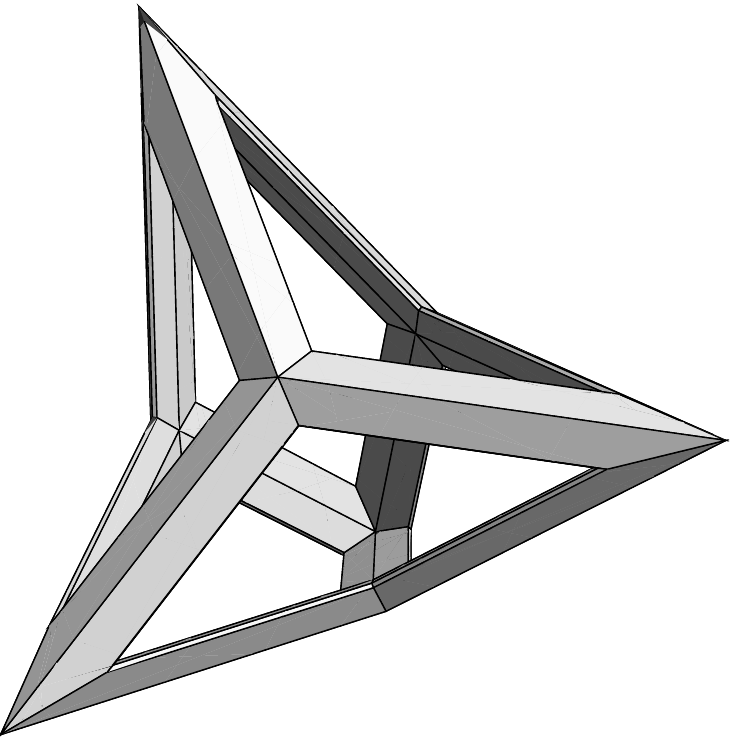}  \includegraphics[scale=.35]{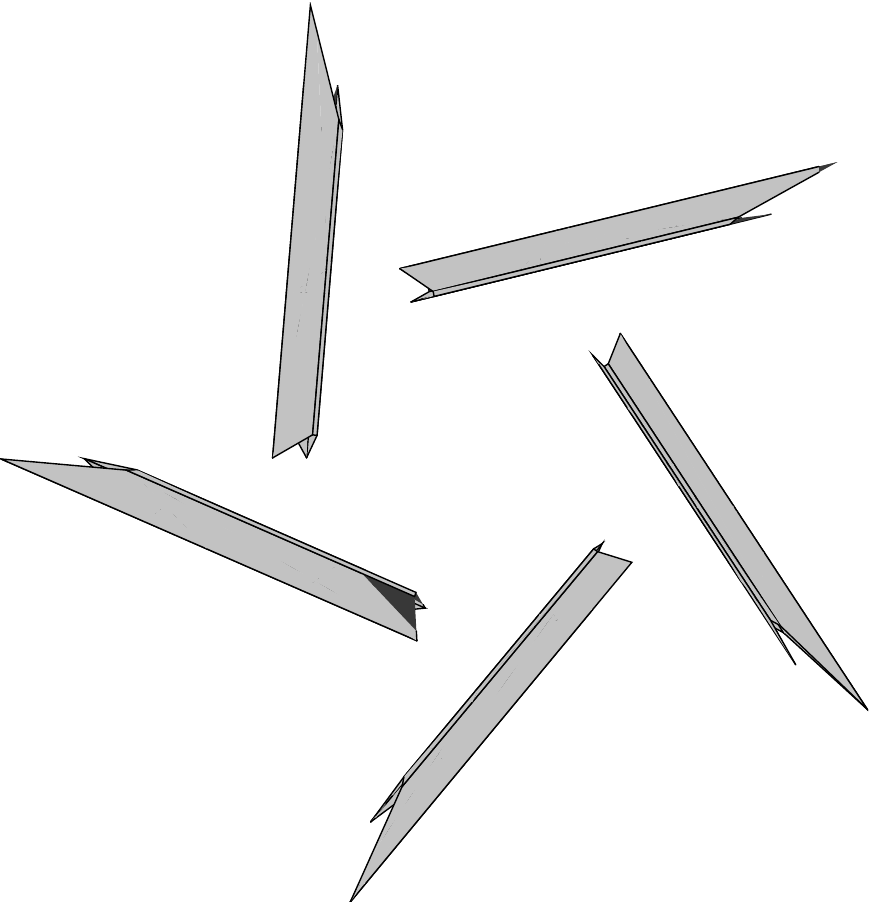}\includegraphics[scale=.27]{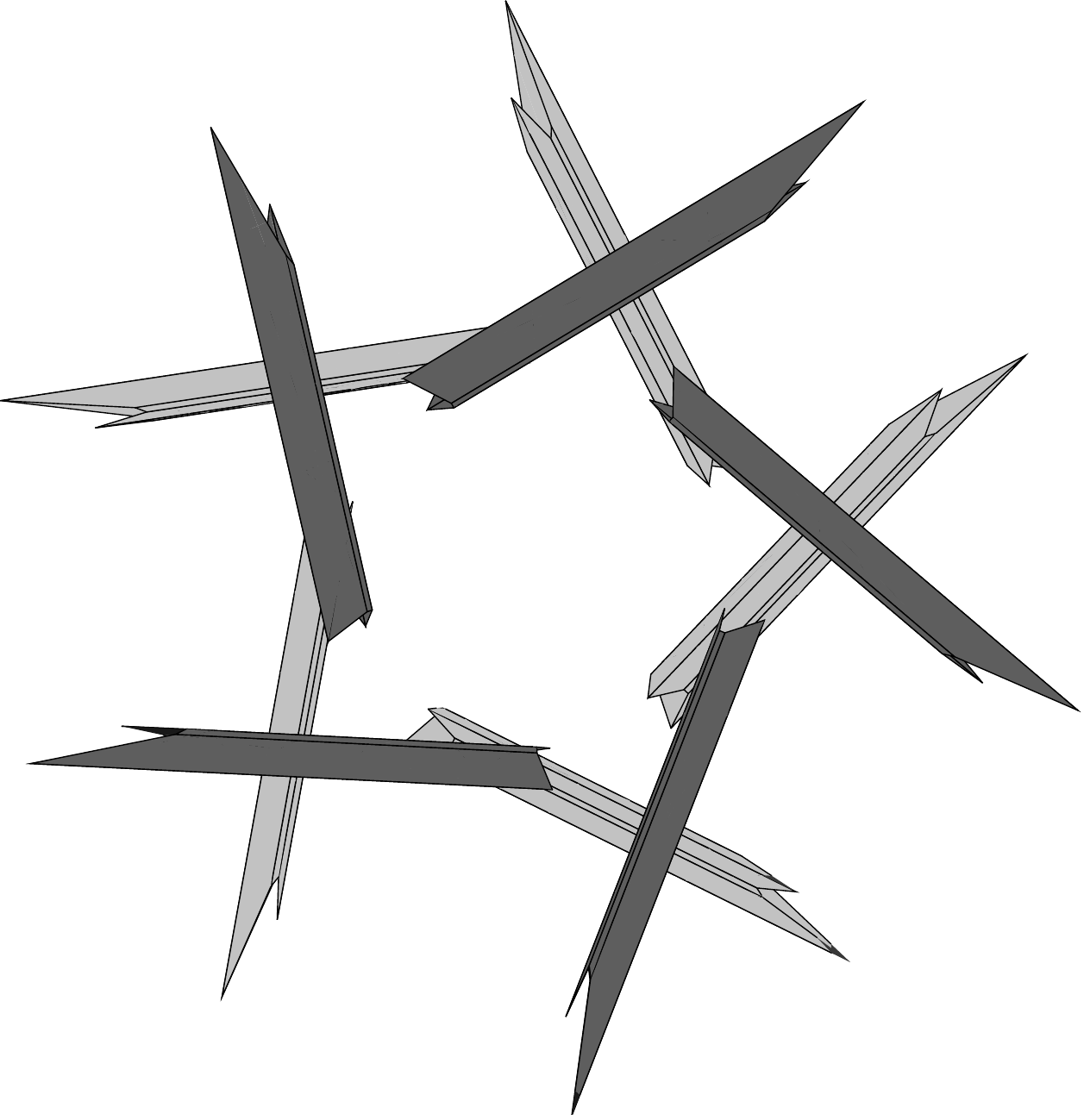} }
\caption{The FINH (left), with a single component (center), a single matching (right), and a pair of matchings (right-er).}
\label{fig:finh}
\end{figure}
The matchings are identical to those of FIET variant A, and so are the combinatorics.  There are no visual bands.

\subsection{Six Intersecting Bi-5-pyramids without base edges (SIB5P)} Each component of the SIB5P has five 4-sided ``faces." The SIB5P has icosidodecahedral symmetry, and there are four polypolyhedral variants (denoted 6-5-4 in Lang's nomenclature).
 The four polypolyhedral variants form two pairs that are substantially different: the components in one pair (A1 and A2, see Figure \ref{fig:sib5p} (left, center)) have 5-valent vertices visually prominent, while the components in the other (B1 and B2, see Figure \ref{fig:sib5p}(right, right-er) ) have 2-valent vertices visually prominent.
\begin{figure}[htp]
\centerline{ \includegraphics[scale=.45]{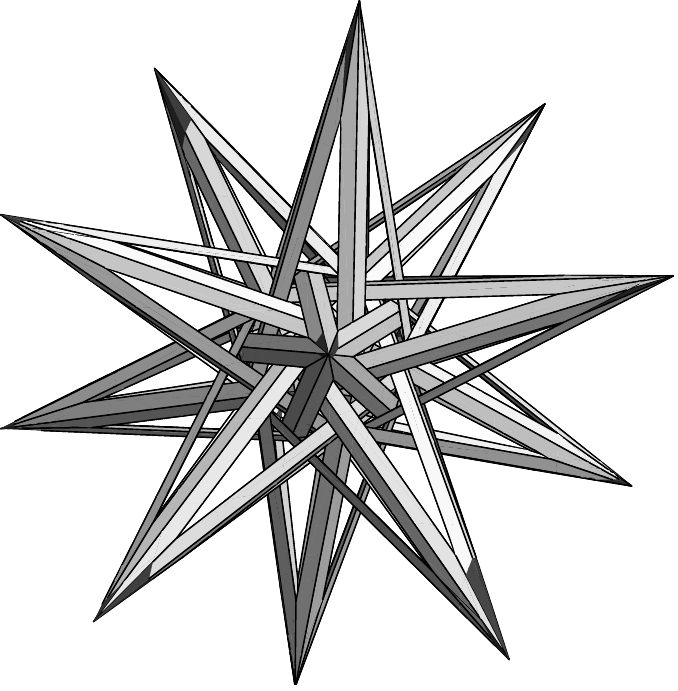} \includegraphics[scale=.45]{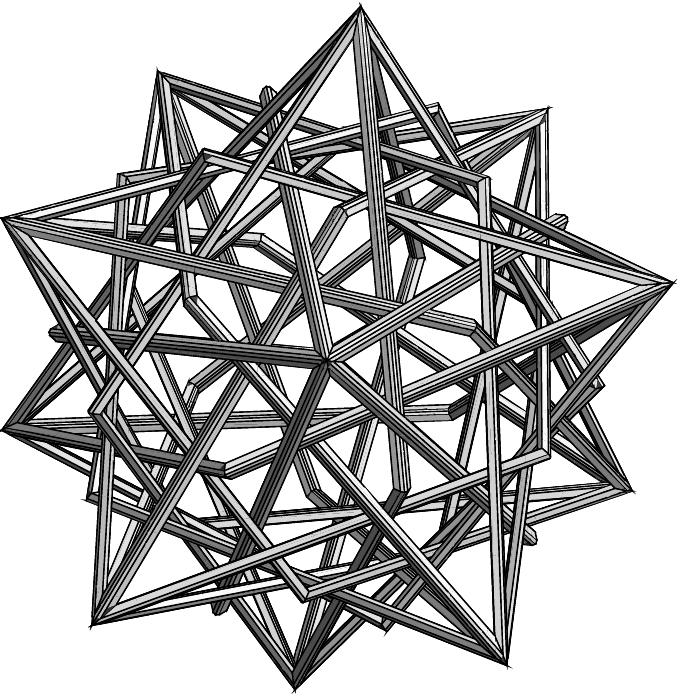}  \includegraphics[scale=.45]{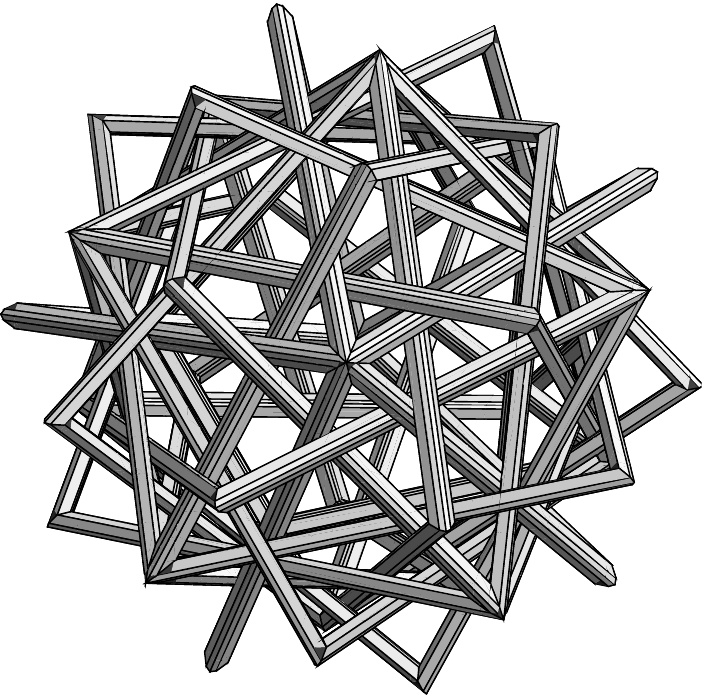}\includegraphics[scale=.45]{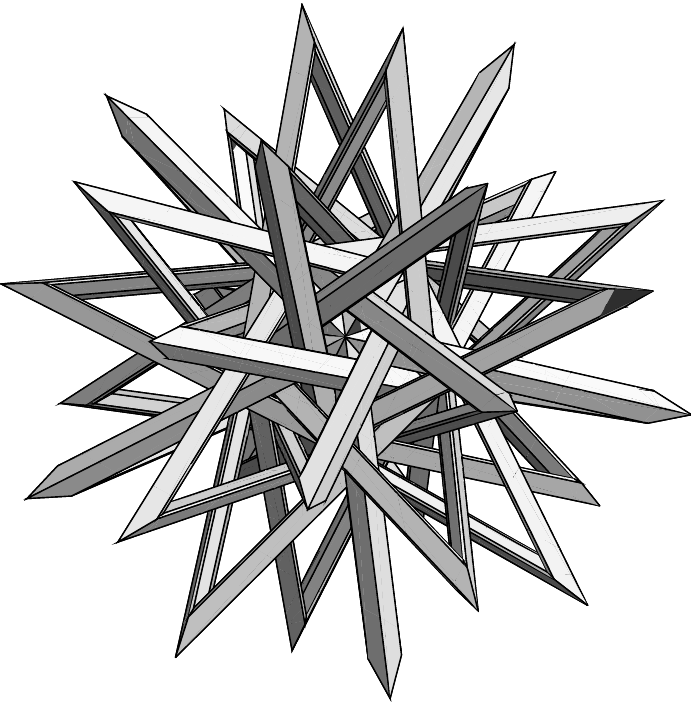} }
\caption{The SIB5P variant A1 (left), variant A2  (center), variant B1 (right), and variant B2 (right-er). }
\label{fig:sib5p}
\end{figure}
However, the combinatorics of coloring the four variants are all the same.

There is no visual band coloring of an A1 or A2 variant; a visual band would need to use 2 of the 5 edges at a vertex, and as 2 does not divide 5, this would not extend to a symmetric coloring.  Consider instead variants B1, B2. While there is a visual 5-pointed star formed by pairs of edges incident to 2-valent vertices, the pairs are not incident and thus do not form a visual band (see Figure  \ref{fig:sib5pM}).
\begin{figure}[htp]
\centerline{ \includegraphics[scale=.4]{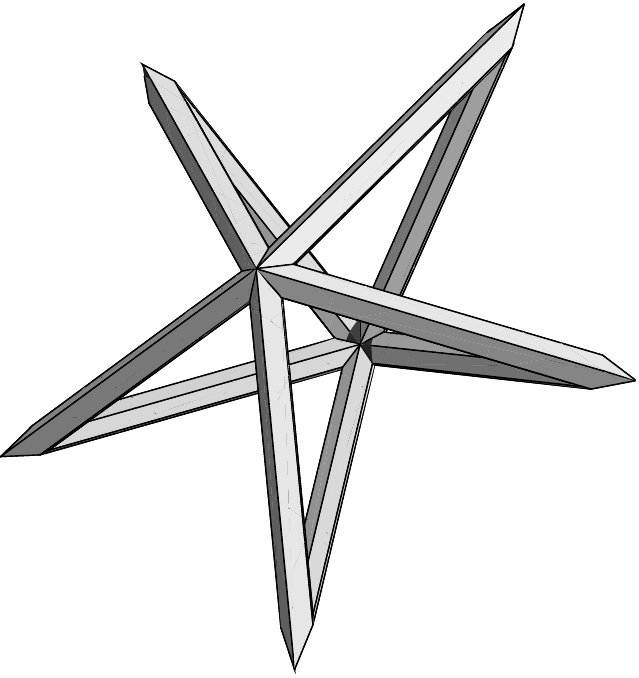} \includegraphics[scale=.36]{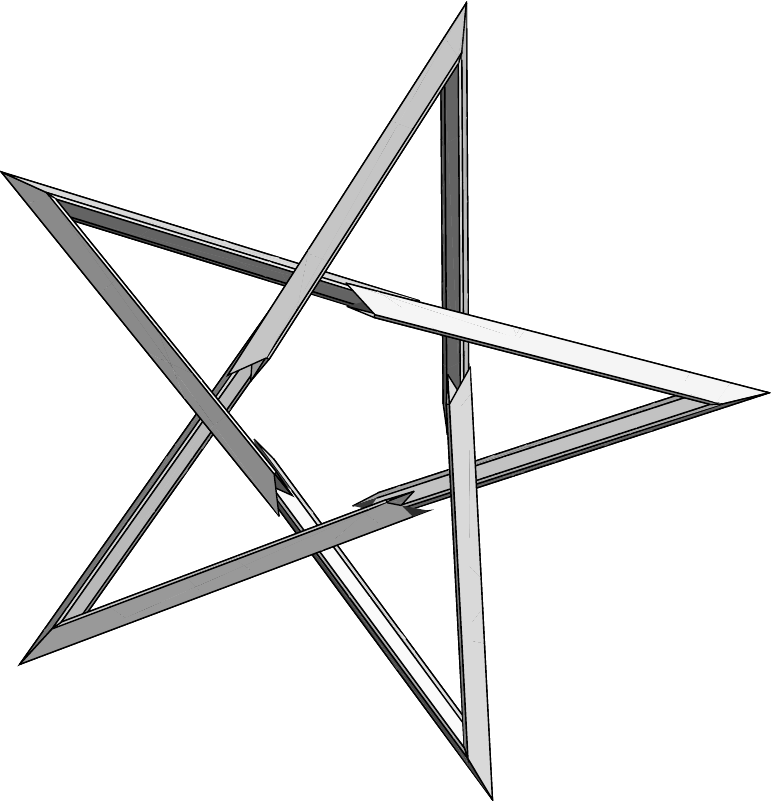}  \includegraphics[scale=.33]{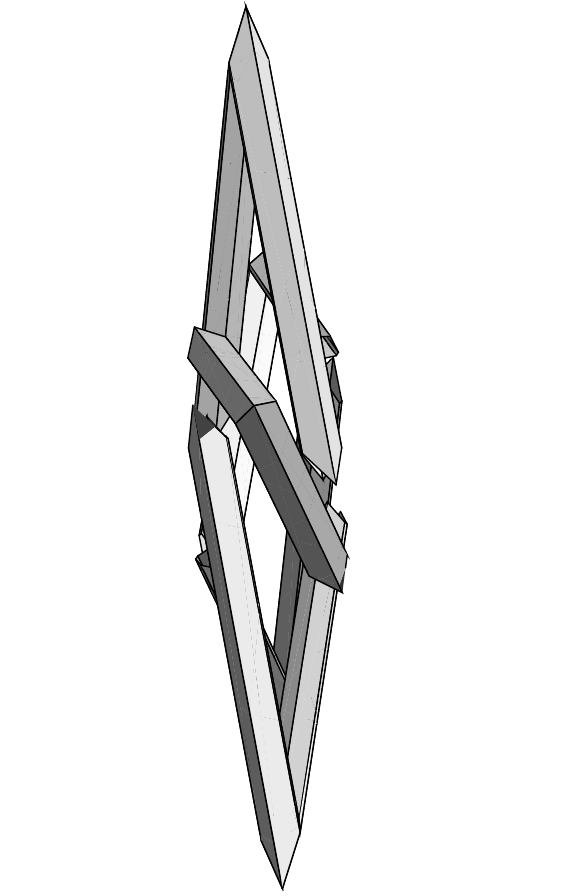}\includegraphics[scale=.3]{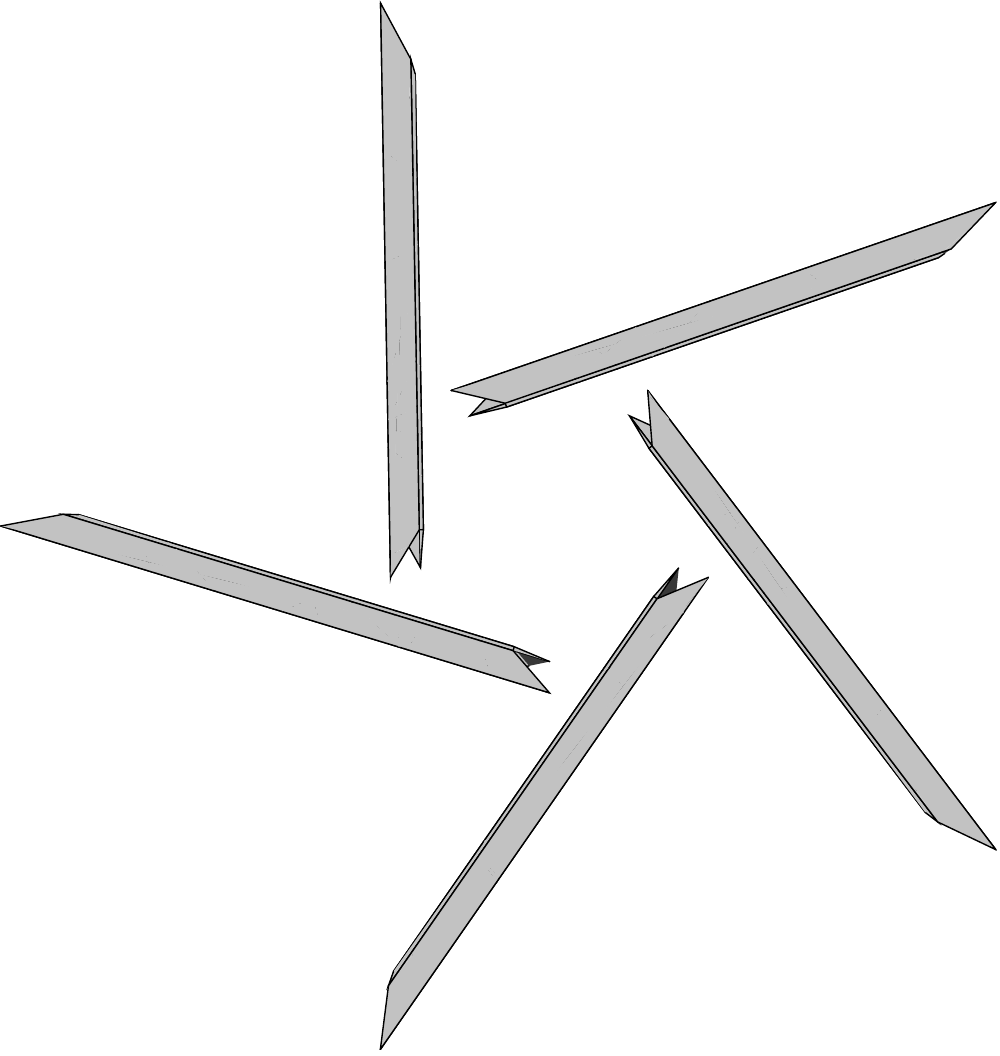} }
\caption{The SIB5P component (left), false visual band in variant B2  (center), falseness of visual band in variant B2 (right), and a single matching (right-er).}
\label{fig:sib5pM}
\end{figure}

However, there is a matching coloring.  The matchings (shown in Figure \ref{fig:sib5pM}) are identical to those of FIET variant A, and so are the combinatorics.

\subsection{Ten Intersecting Bi-3-pyramids without base edges (TIB3P)} Each component of the TIB3P has three 4-sided ``faces."  The TIB3P has icosidodecahedral symmetry, and there are three polypolyhedral variants. (It is denoted 10-3-4 in Lang's nomenclature.) 

The three polypolyhedral variants are substantially different: the components in two (A and B, see Figures \ref{fig:tib3pA} and \ref{fig:tib3pB}) look like glued-together {\bf Y}s, while the components in the third (C, see Figure \ref{fig:tib3pC}) look like sparse whisks.
\begin{figure}[htp]
\centerline{ \includegraphics[scale=.45]{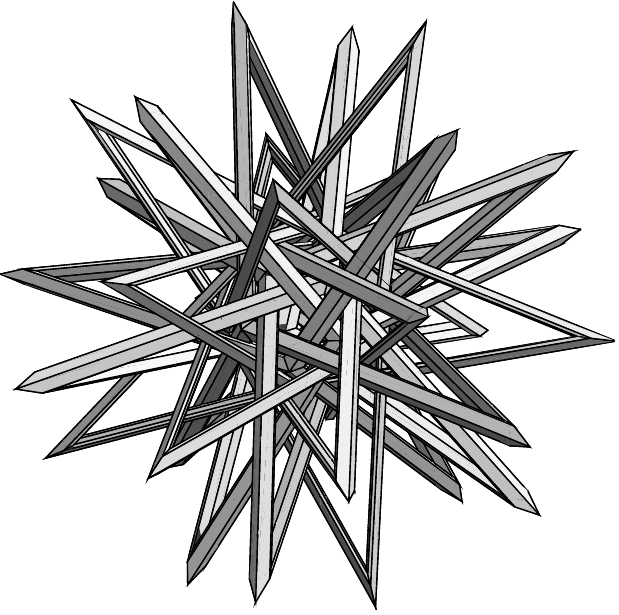} \includegraphics[scale=.35]{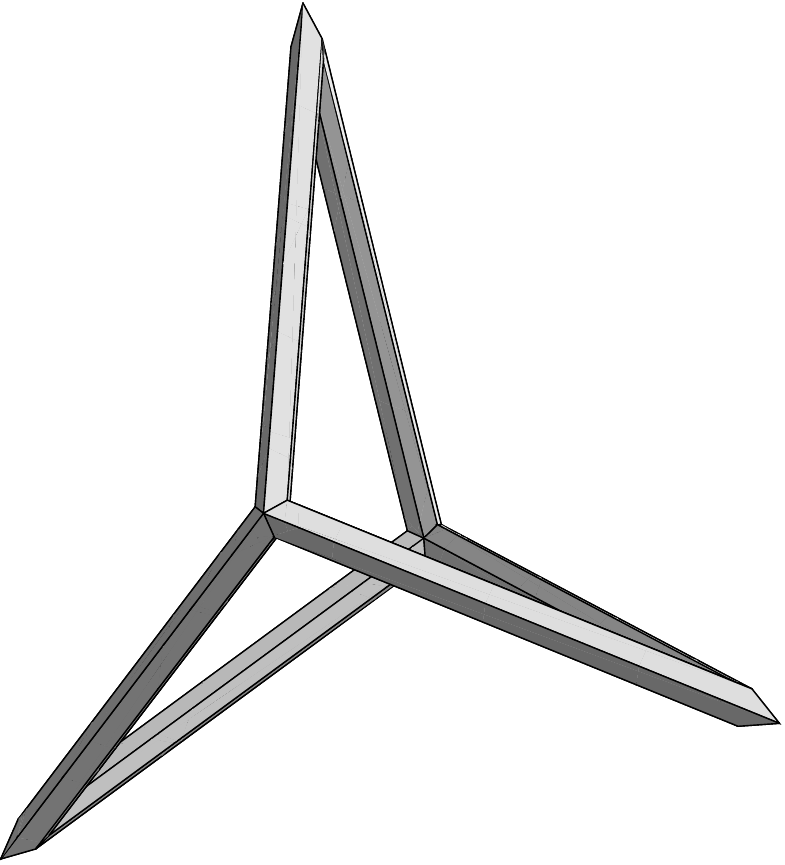}  \includegraphics[scale=.35]{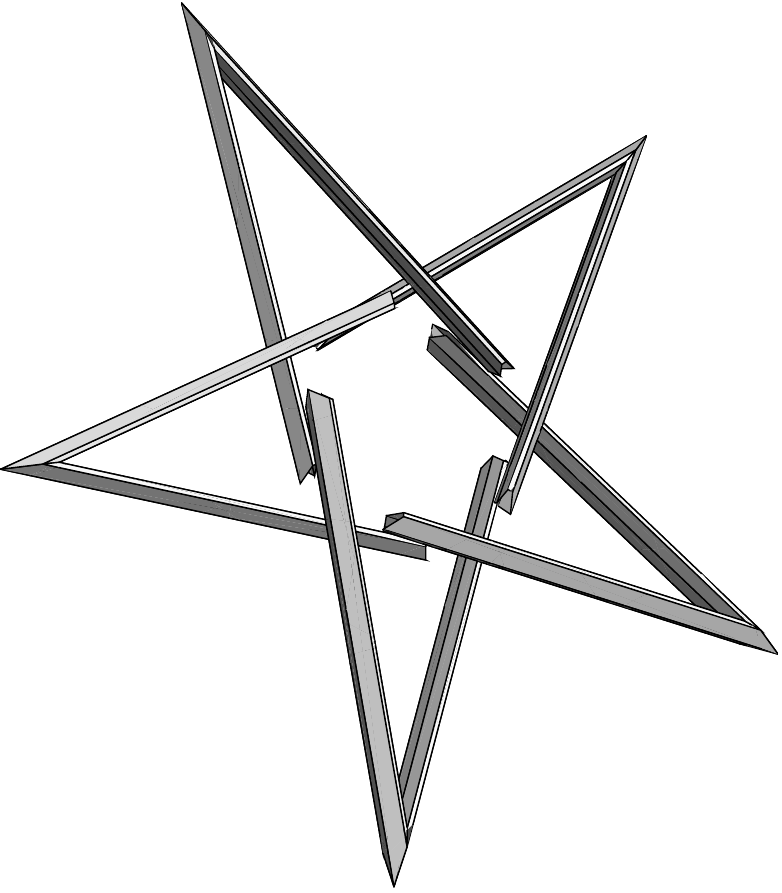}\includegraphics[scale=.35]{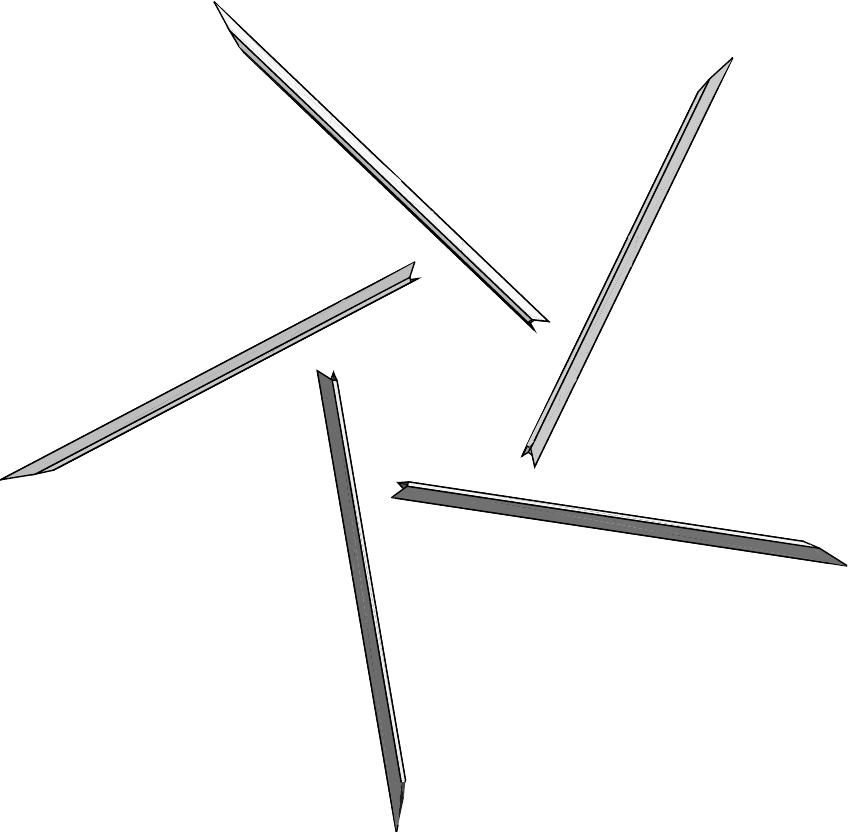} }
\caption{The TIB3P variant A (left), with a single component (center), a visual band (right), and a single matching  (right-er).}
\label{fig:tib3pA}
\end{figure}

\begin{figure}[htp]
\centerline{ \includegraphics[scale=.45]{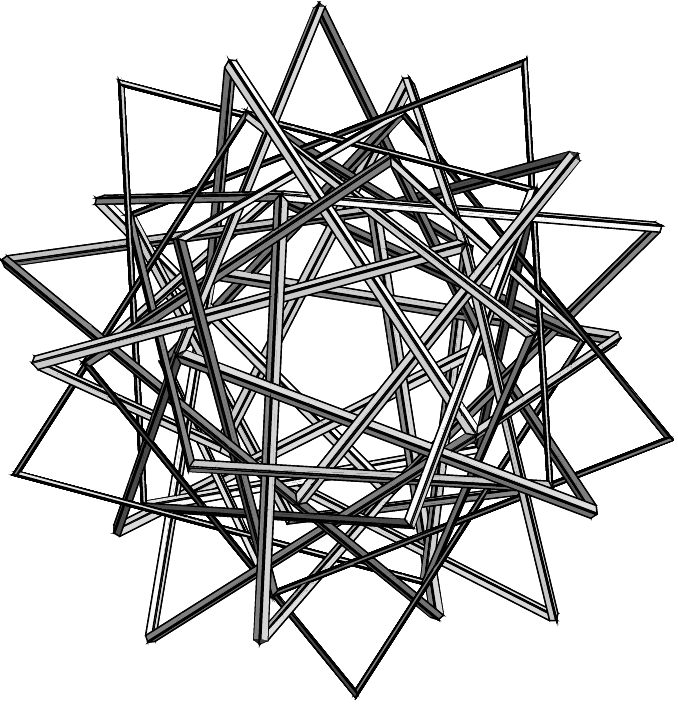} \includegraphics[scale=.35]{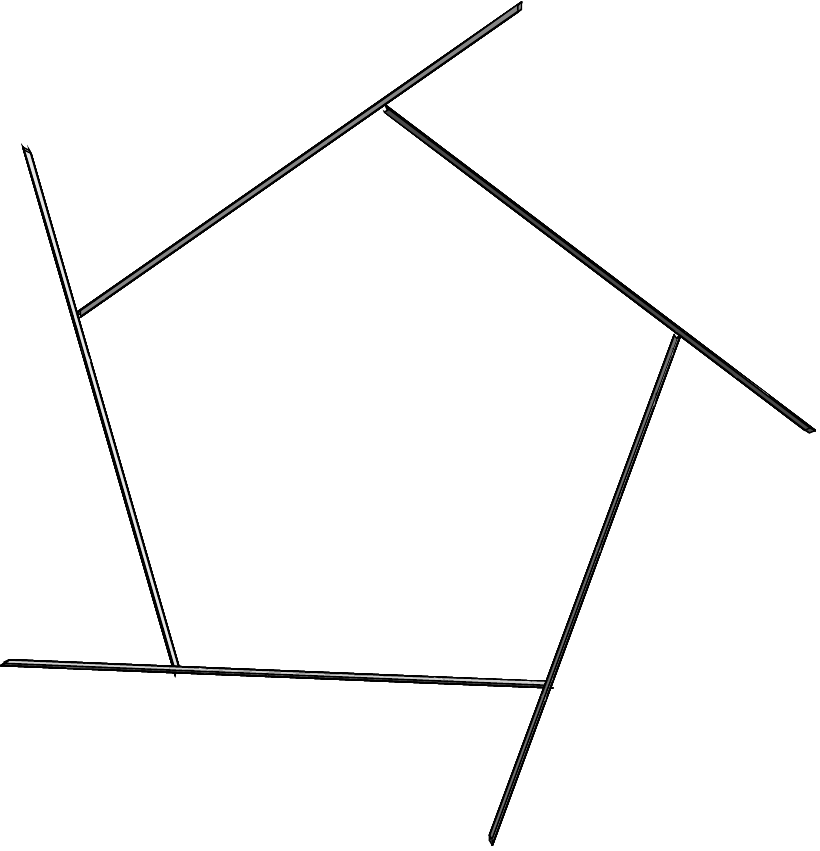} }
\caption{The TIB3P variant B (left), with a single matching  (right).}
\label{fig:tib3pB}
\end{figure}

\begin{figure}[htp]
\centerline{ \includegraphics[scale=.45]{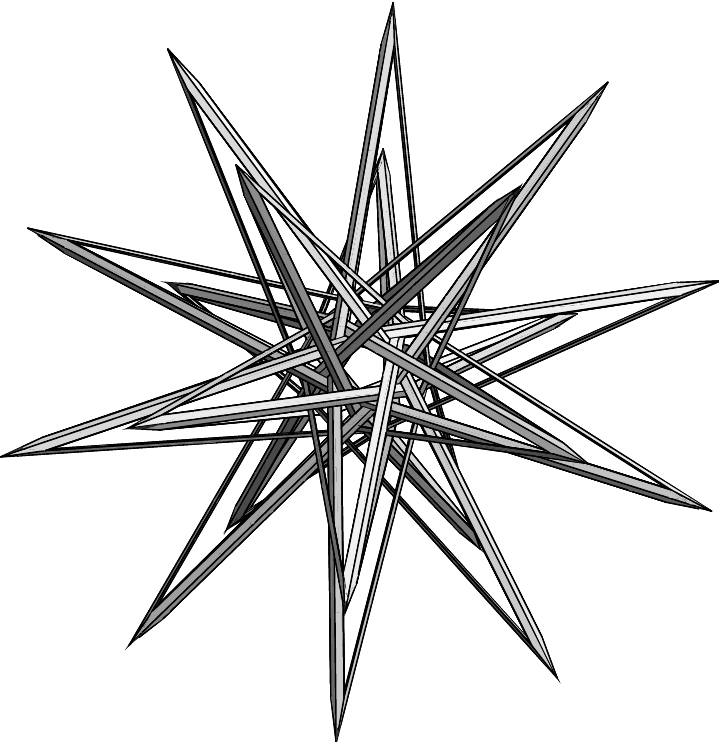}\includegraphics[scale=.4]{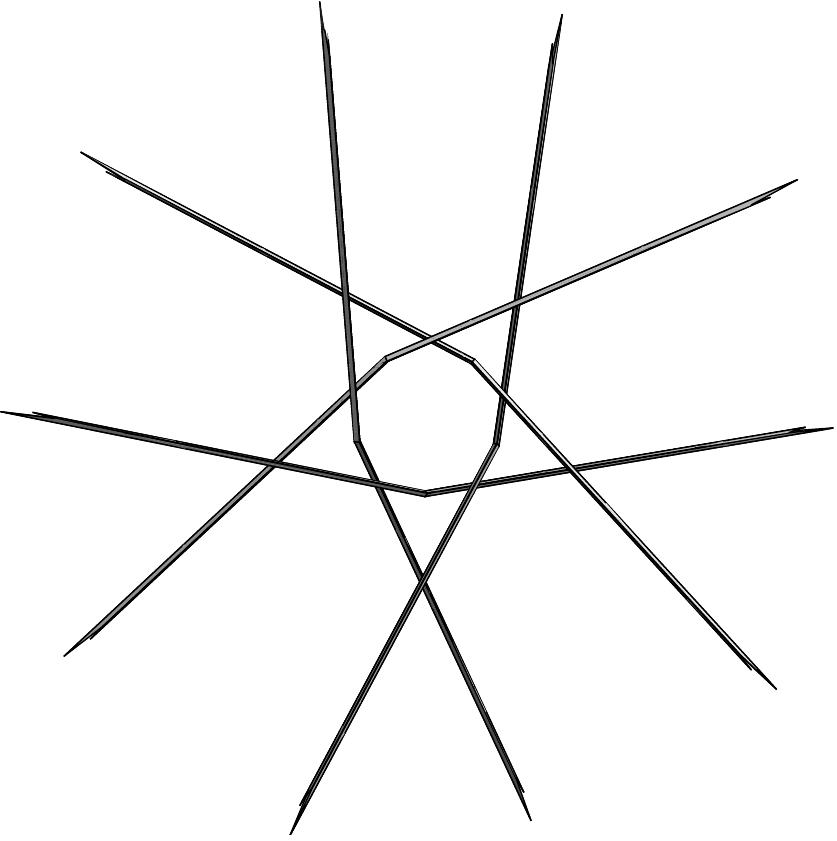}\includegraphics[scale=.25]{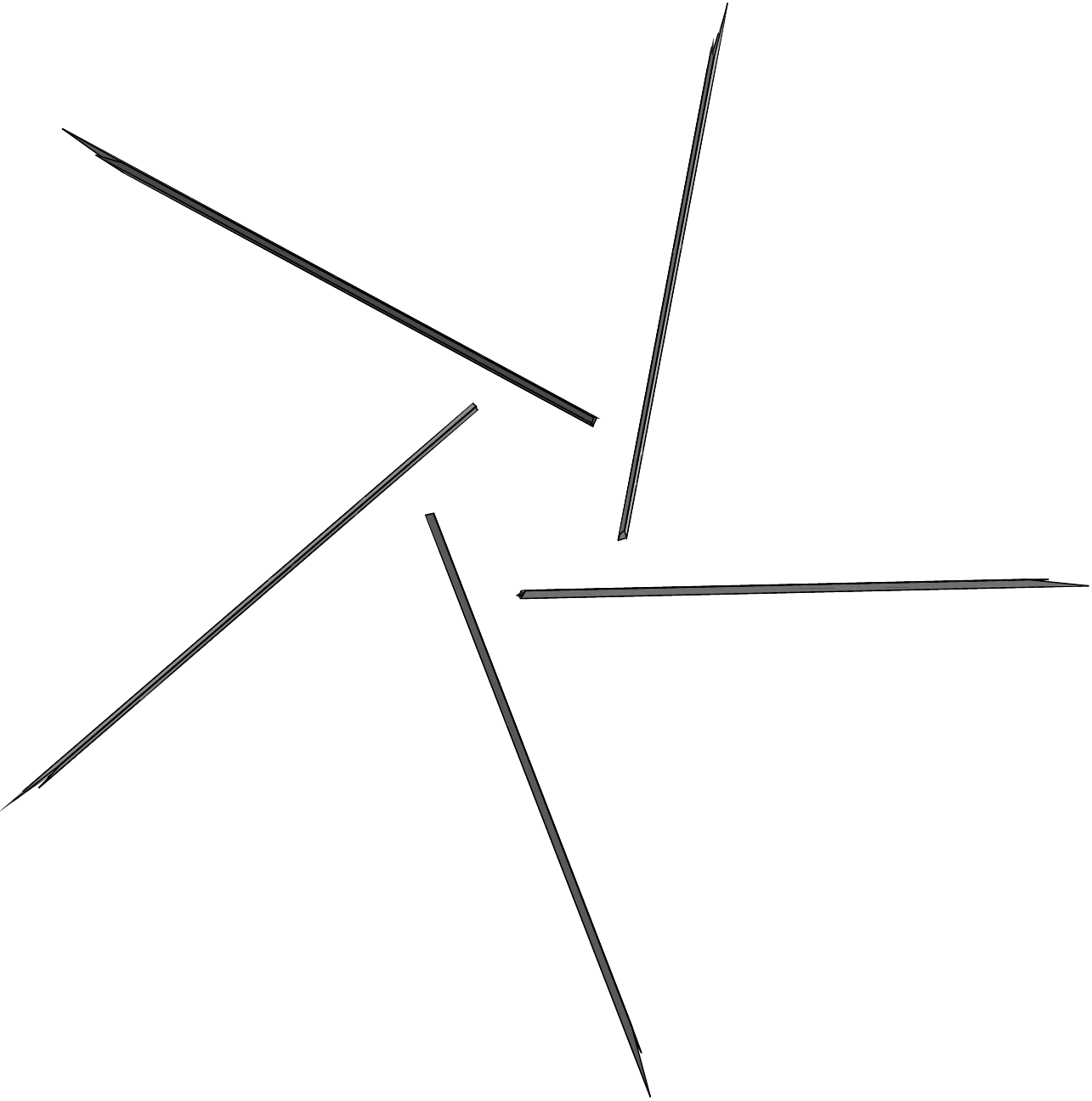} }
\caption{The TIB3P variant A (left), with  a visual band (center), and a single matching  (right).}
\label{fig:tib3pC}
\end{figure}

Variants A and C have visual band colorings (see Figures \ref{fig:tib3pA} and \ref{fig:tib3pC}) , but variant B does not.  The visual bands have  the same combinatorics as the FIET's visual band colorings.  Of course all three variants have matching colorings.  In this case,  the components each use six colors on six edges, and have the same combinatorics as the FIT band colorings; thus there are 12 matching colorings.

\subsection{Four Intersecting Bi-3-pyramids without base edges (FIB3P)} Each component of the FIB3P has three 4-sided ``faces."  The FIB3P has cuboctahedral symmetry, and there are two polypolyhedral variants. (It is denoted 4-3-4  in Lang's nomenclature.) The two polypolyhedral variants are substantially different: the components in one (A, see Figure \ref{fig:fib3pA}) look like glued-together {\bf Y}s, while the components in the other (B, see Figure \ref{fig:fib3pB}) look like sparse whisks.
\begin{figure}[htp]
\centerline{ \includegraphics[scale=.3]{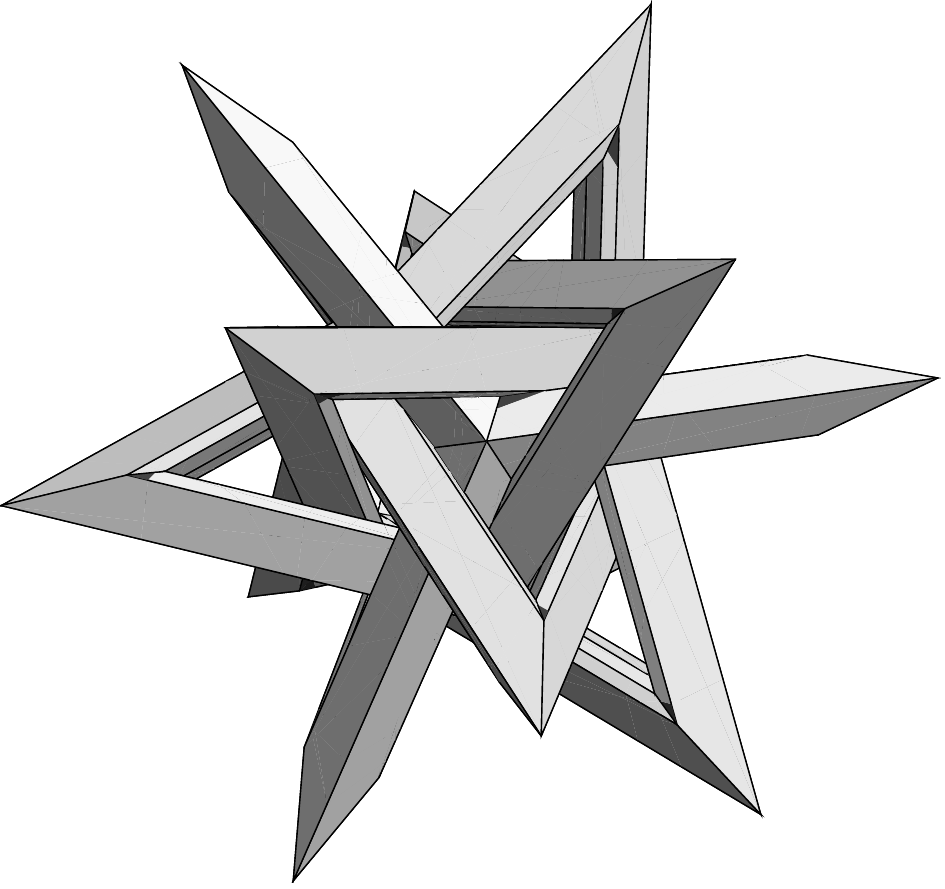} \includegraphics[scale=.3]{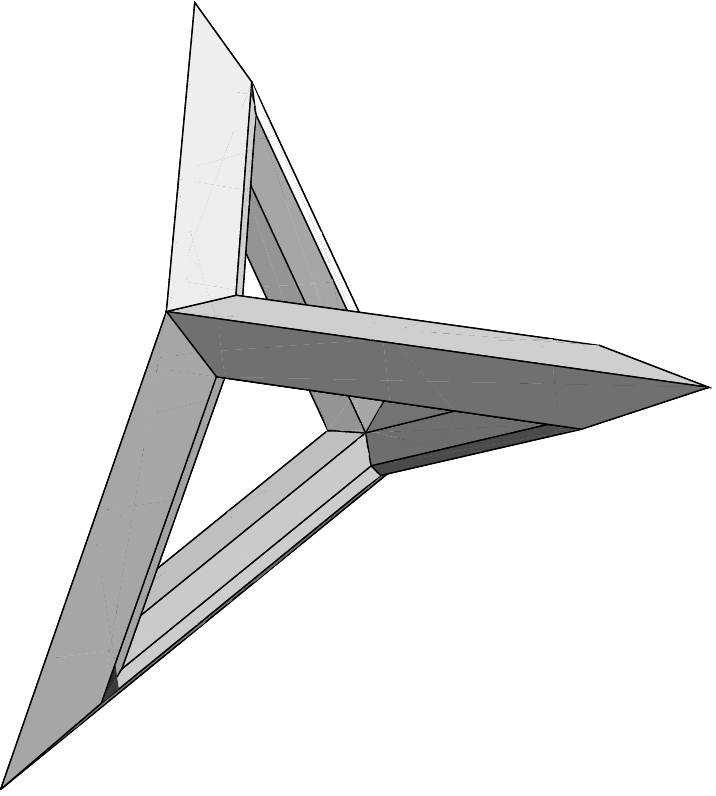}  \includegraphics[scale=.3]{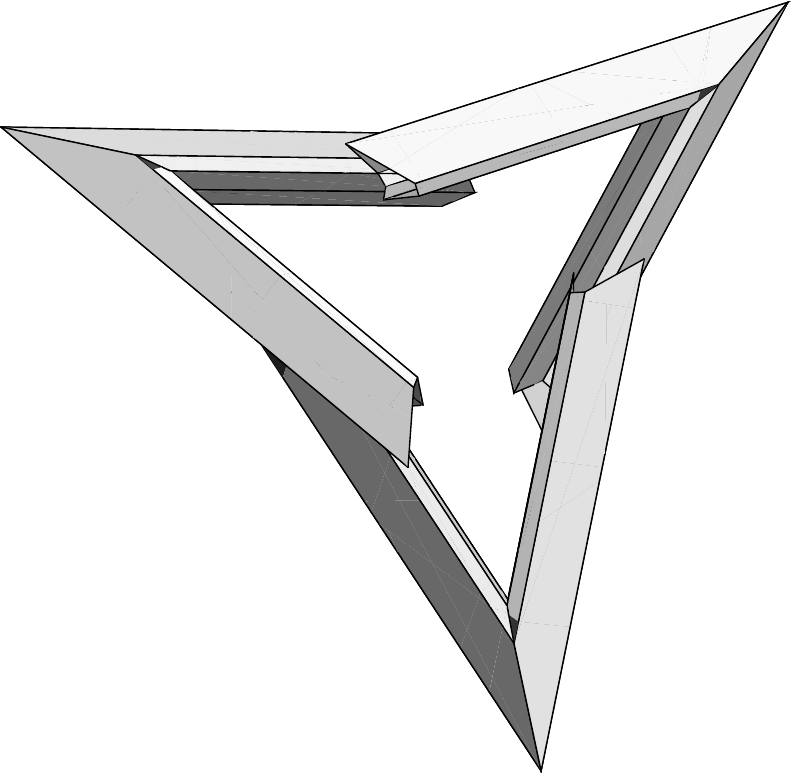}\includegraphics[scale=.3]{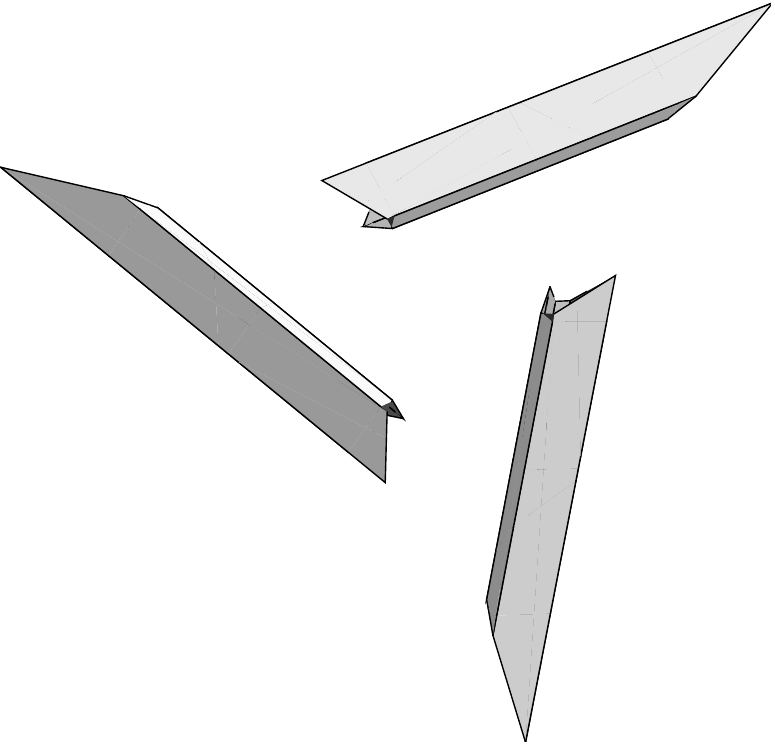} }
\caption{The FIB3P variant A (left), with a single component (center), a band (right), and a single matching (right-er).}
\label{fig:fib3pA}
\end{figure}

Variant A has visual bands as well as matchings (see Figure \ref{fig:fib3pA}).  The visual bands $B$ are composed of 2 consecutive struts from each of 3 components and so correspond to a 3-edge matching $M(B)$ in the symmetry cube. Thus there is only one visual band coloring (see Section \ref{sec:MC}). The matchings come in pairs to form the visual bands.  Following the FIET argument, we have eight matchings and ${8 \choose 4}$ ways of assigning 4 colors to one matching in each pair, and $4!$ ways of assigning the remaining colors, so ${8 \choose 4}\cdot 4!$ matching colorings in total.

In variant B,  
the whisk apices point to the vertices of a bounding cube.  It has a matching coloring (see Figure \ref{fig:fib3pB})    
\begin{figure}[htp]
\centerline{ \includegraphics[scale=.3]{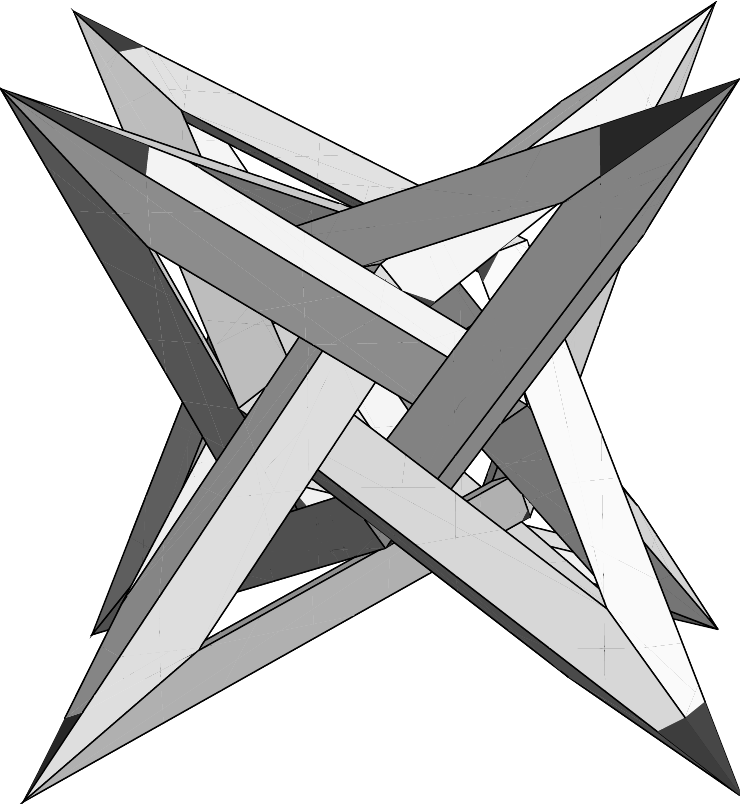} \includegraphics[scale=.23]{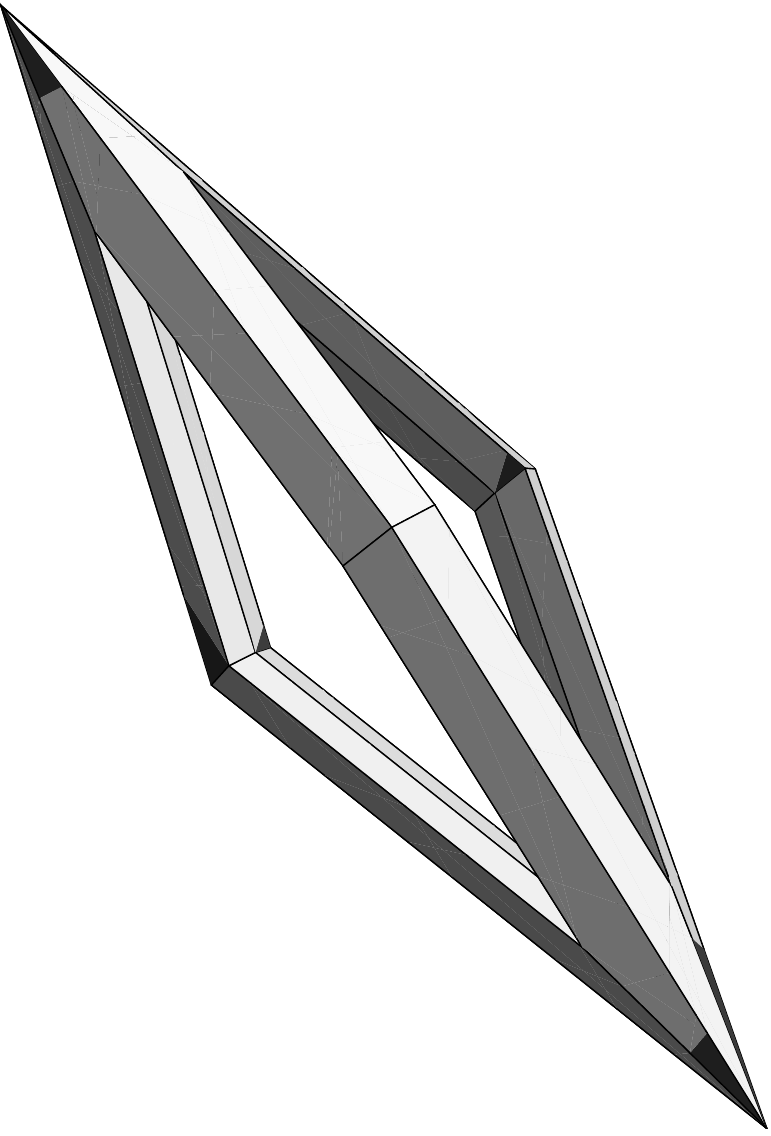}  \includegraphics[scale=.26]{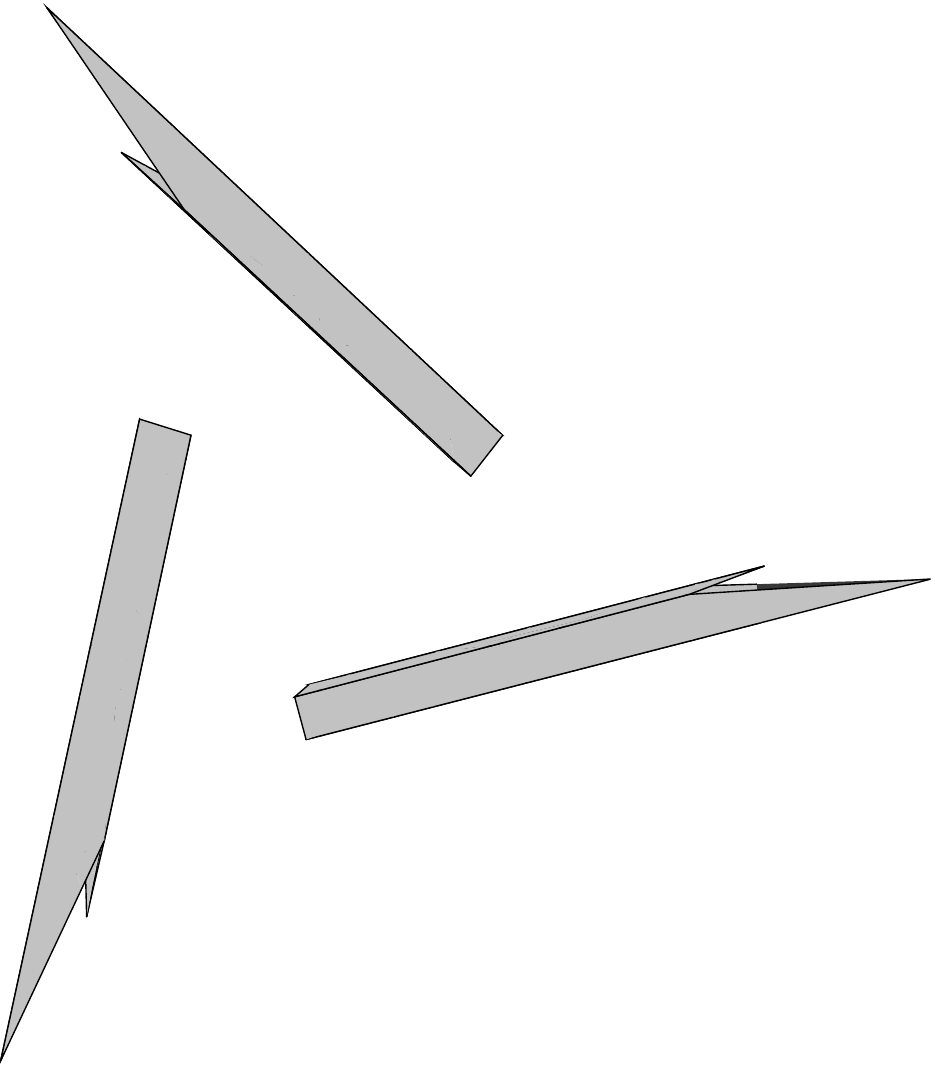} \includegraphics[scale=.23]{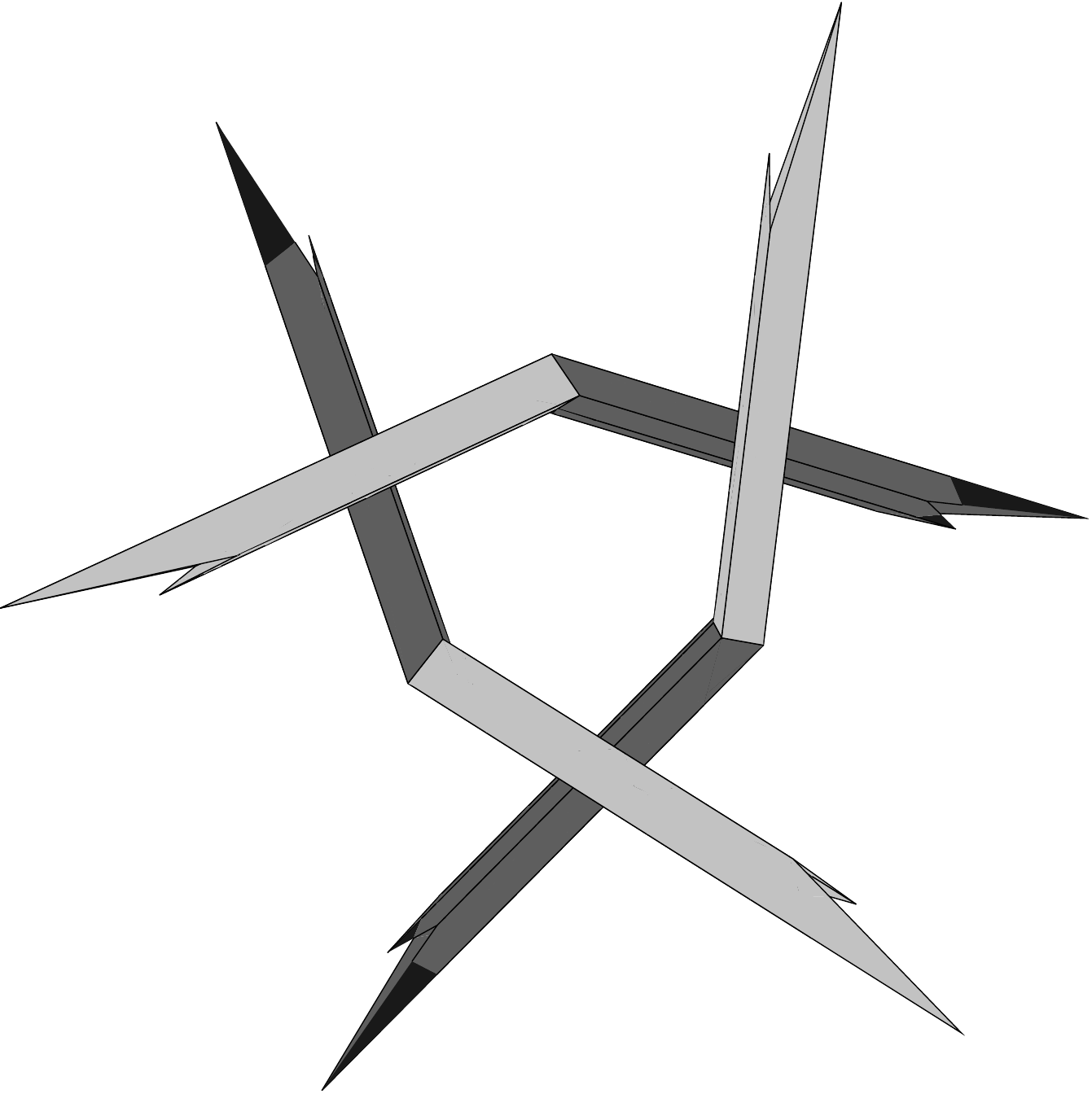} }
\caption{The FIB3PT variant B (left), with a single component (center),  a single matching (right), and a pair of matchings (right-er).}
\label{fig:fib3pB}
\end{figure}
but no visual band coloring, as a visual band coloring would require using two struts of the same color at each 3-valent vertex. Even though there are no visual bands, the matchings come in pairs (see Figure \ref{fig:fib3pB}(right-er)) so the combinatorics are the same as for variant A.

\subsection{Colorings of the polypolyhedra with polygon components}\label{sec:polygons}


When the components of a polypolyhedron are polygons, we easily obtain a visual band coloring where each component is monochromatic (see Section \ref{subsec:visual bands}).  There are other symmetric matching colorings where each component is rainbow colored.  We will not go into the details, but the combinatorics of such examples is exactly the same as other examples already described.

\section{Conclusions and further work}

There are more symmetric colorings of polypolyhedra than described in this paper:

\begin{enumerate}

\item There is a symmetric rainbow coloring of the components of the SIB5P that requires 10 colors.

\item In the FIT there is a non-band set of edges, with \emph{maximal} dot product to a 5-fold symmetry axis, that corresponds to the same dodecahedral matching as a band; this extends to a symmetric coloring of the FIT.

\item The FIT also has a symmetric coloring that corresponds to a decomposition of the dodecahedron into 10 matchings that each have 3-fold symmetry.


\end{enumerate}

Similar analysis as performed above can be done on these symmetric colorings. Other interesting questions abound.  Are there other numbers of colors/matchings that can lead to symmetric colorings of polypolyhedra?  What do the 23 variations of the twenty interlaced triangles have to offer?

There are also relationships between symmetric polypolyhedral colorings.  Looking closely at the symmetric colorings of polypolyhedra with polygon components leads to the discovery of a {\em duality} between polyhedron-component polypolyhedra and polygon-component polypolyhedra.  This will be described in a future paper (stay tuned!).

\bibliographystyle{akpbib}
\bibliography{polypolycolor-6OSME-proceedings.bib}

%
%
%
%
%
%

\end{document}